\newcommand{\R}{{\rm I}\kern-0.18em{\rm R}}
\def\RR{\mathbb{R}}
\renewcommand{\C}{{\rm C}}
\newcommand{\e}{{\sss E}}
\newtheorem{Thm}{Theorem}[section]
\newtheorem{Prop}[Thm]{Proposition}
\newtheorem{Lem}[Thm]{Lemma}
\newtheorem{lem}[Thm]{Lemma}
\def\aa#1{ \begin{align*} #1 \end{align*} }
\def\aaa#1{ \begin{align} #1 \end{align} }
\def\mm#1{ \begin{multline*} #1 \end{multline*} }
\def\mmm#1{ \begin{multline} #1 \end{multline} }
 \newcommand{\<}{\langle}
\renewcommand{\>}{\rangle}
\newcommand{\sss}{\scriptscriptstyle}
\newcommand{\eps}{\varepsilon}
\newcommand{\pl}{\partial}
\renewcommand{\i}{{(i)}}
\renewcommand{\j}{{(j)}}
\newcommand{\gt}{\geqslant}
\newcommand{\lt}{\leqslant}
\newcommand{\te}{\theta}
\newcommand{\rf}{\eqref}
\newcommand{\dl}{\delta}
\newcommand{\al}{\alpha}
 \newcommand{\Dl}{\Delta}
 \newcommand{\la}{\lambda}
 \newcommand{\sg}{\sigma}
\newcommand{\om}{\omega}
\newcommand{\mc}{\mathcal}
\newcommand{\Om}{\Omega}
\newcommand{\x}{\times}
\newcommand{\mto}{\mapsto}
\newcommand{\nab}{\nabla}
\newcommand{\fdot}{\,\cdot\,}
\newcommand{\td}{\tilde}
\newcommand{\lb}{\label}
\def\Rnu{{\mathbb R}}
\renewcommand{\leq}{\lt}
\def\ffi{\varphi}
\def\til{\tilde}
\numberwithin{equation}{section}
\begin{document}

\noindent{\bf\sl Global and Stochastic Analysis\\
Vol. 2, No. 1, June 2012\\
ISSN 2248-9444\\
Copyright\copyright   Mind Reader Publications}

\vspace{0.5cm}

\addcontentsline{toc}{section}{R.S. Pereira and E. Shamarova.
Forward-Backward SDEs driven by L\'evy Processes and Application
 to n}

\begin{center}
{\LARGE \bf Forward-Backward SDEs \protect\\\vspace{2.5mm}
driven by L\'evy Processes and\protect\\
\vspace{5mm} Application to
Option Pricing}\\

\vspace{5mm} {\bf R.S. Pereira$^a$ and E. Shamarova$^b$}\\

 \vspace{3mm}

 Centro de Matem\'atica da Universidade do Porto\\
 Rua do Campo Alegre 687, 4169-007 Porto, Portugal\\
 $^a$manuelsapereira@gmail.com \\
 $^b$evelinas@fc.up.pt

\vspace{2mm}

{\small Received by the Editorial Board on May 2, 2012}

\vspace{1mm}

\end{center}

\begin{abstract}
Recent developments on financial markets have revealed the limits of
Brownian motion pricing models when they are applied to
actual markets. L\'evy processes, that admit jumps over time, have
been found more useful for applications. Thus, we suggest
a L\'evy model based on Forward-Backward Stochastic Differential
Equations (FBSDEs) for option pricing in a L\'evy-type market. We
show the existence and uniqueness of a solution to FBSDEs driven by
a L\'evy process. This result is important from the mathematical
point of view, and also, provides a much more realistic approach to
option pricing.

\vspace{2mm}

{\bf Key words:} Forward-backward stochastic differential equations;
FBSDEs; L\'evy processes; Partial integro-differential equation;
Option pricing.

\vspace{2mm}

{\bf 2010 Mathematics subject classification:} 60J75  60H10 60H30
35R09 91G80

\end{abstract}

\section*{1 Introduction}

\setcounter{section}{1}

\setcounter{equation}{0}

\setcounter{Thm}{0}

Since the seminal contribution made by F. Black and M. Scholes
\cite{bs}, several methodologies to value contingent assets have
been developed. From Plain Vanilla options to complex instruments
such as Collateral Debt Obligations or Baskets of Credit Default
Swaps, there are models to price virtually any type of contingent
asset. There are, indeed, successful attempts on providing general
models which in theory could price any kind of contingent claim
(see, for example \cite{cvitanic}), given a payoff function. The
idea behind these models is quite standard: A portfolio replicating
the payoff function of the asset is devised and, under non-arbitrage
conditions, the price of the asset at a certain instant of time is
the price of this portfolio at that time.
 
However, in spite of all this diversity and sophistication,
there is an assumption that is, up to recent times, rarely
questioned. Specifically, we refer to the assumption that stock
prices are continuous diffusion processes,
presupposing thereby that the returns have normal distributions at any time. However, it is well known today
that empirical distributions of stock prices returns tend to deviate
from normal distributions, either due to skewness, kurtosis or even
the existence of discontinuities (Eberlein et al. give evidence of
this phenomenae in \cite{eberlein}).
The recent developments have shown that the reliance on normal
distribution can bring costly surprises, especially when extreme and
disruptive events occur with a much higher frequency than the one
estimated by models.

As such, we believe that no matter which historical status the normal distribution
has acquired throughout the years, strong efforts should be
undertaken in order to develop alternative models that incorporate
assumptions adequated to the observed evidence on financial markets,
such as asymmetry or skewness. We do not pretend that some
definitive model can actually be developed, especially when market
participants' main activities are currently shifting due to the
conditions imposed on financial markets. Indeed,
the recently introduced regulations on financial markets severely
restraining the use of own's capital for trading purposes will force
the financial players to find new ways of driving a profit. This
adds another layer of uncertainty about the assumptions imposed on a
model. We believe, however, that in spite of the inherent inability
to prove that any present model can account for future market
conditions, it is worth to attempt to correctly price financial
claims in the present and near future market conditions, which, as
it is clear now, is fundamental to the stability of markets.

Taking the above considerations into account,  we propose to
replicate contingent claims in L\'evy-type markets, i.e. in markets
with the stock-price dynamics described as $S_t=S_0\, e^{X_t},$
where $X_t$ is a L\'evy-type stochastic integral defined in
\cite{applebaum}. This  allows the relaxation of conditions posed on
the pricing process such as symmetry, non-skewness or continuity,
imposed by the Brownian framework. The self-similiarity of the
pricing process, appearing due to a Brownian motion, is also ruled
out from the assumptions. We base our model on the study of
Forward-Backward Stochastic Differential Equations (FBSDEs) driven
by a L\'evy process. FBSDEs combine equations with the initial and
final conditions which allows one to search for a replicating
portfolio.
 Specifically, we are concerned with the following fully coupled FBSDEs:
\begin{equation}
\label{fbsde}
\begin{cases}
X_{t} = x + \int_{0}^{t} f(s, X_{s}, Y_{s},Z_{s})\: ds +
\sum_{i=1}^\infty\int_{0}^{t} \sigma_i(s,X_{s-}, Y_{s-})\:
dH^\i_{s},
\\
Y_{t} = h(X_{T}) + \int_{t}^{T} g(s, X_{s}, Y_{s},Z_{s})\: ds -
\sum_{i=1}^\infty\int_{t}^{T} Z^i_{s} \: dH^\i_{s},
\end{cases}
\end{equation}
where the stochastic integrals are written with respect to the
orthogonalized Teugels martingales $\{H^\i_t\}_{i=1}^\infty$
associated with a L\'evy process $L_t$ \cite{WSchoutens}. We are
searching for an $\Rnu^P\x \Rnu^Q \x (\Rnu^Q\x \ell_2)$-valued
solution $(X_t,Y_t,Z_t)$ on an arbitrary time interval $[0,T]$,
which is square-integrable and adapted with respect to the
filtration $\mc F_t$ generated by $L_t$. To the authors' knowledge,
fully coupled FBSDEs of this type have not been studied before.
Fully decoupled FBSDEs involving L\'evy processes as drivers were
studied by Otmani \cite{Otmani}. Backward SDEs driven by Teugels
martingales were studied by Nualart and Schoutens \cite{nualart}.
Our method of solution to the FBSDEs could be compared to the Four
Step Scheme  \cite{Ma-Protter}. The original four step scheme deals
with FBSDEs driven by a Brownian motion, and the solution is
obtained via the solution to a quasilinear PDE. Replacing the
stochastic integral with respect to a Brownian motion with a
stochastic integral with respect to the orthogonalized Teugels
martingales leads to a partial integro-differential equation (PIDE).
The solution to the PIDE is then used to obtain the solution to the
FBSDEs.

The organization of the paper is as follows. In Section 2, we give
some preliminaires on the martingales $\{H^\i_t\}$. In Section 3,
under certain assumptions, we obtain the existence and uniqueness
result for the associated PIDE. Our main result is Theorem
\ref{main-result}, where we obtain a solution to FBSDEs \rf{fbsde}
via the solution to the PIDE and prove its uniqueness. In section 4,
we apply the results of Section 3 to model hedging options for a
large investor in a L\'evy-type market. Previously, this problem was
studied by Cvitanic and Ma \cite{cvitanic} for a Brownian market
model. Finally, we study conditions for the existence of replicating
portfolios.

\section*{2 Preliminaires}

\setcounter{section}{2}

\setcounter{equation}{0}

\setcounter{Thm}{0}

 Let $(\Omega, \mc F, \mc F_t,P)$ be a filtered probability space,
where $\{ {\mc F}_t \}$, $t \in [0,T]$, is the filtration generated
by a real-valued L\'evy process $L_t$. Note that the L\'evy measure
$\nu$ of $L_t$ always satisfies the condition \aa{ \int_\Rnu
(1\wedge x^2) \, \nu(dx) < \infty. } We make the filtration  $\mc
F_t$ $P$-augmented, i.e. we add all $P$-null sets of $\mc F$ to each
$\mc F_t$. Following Nualart and Schoutens~\cite{nualart1} we
introduce the orthogonalized Teugels martingales
$\{H_t^{(i)}\}_{i=1}^{\infty}$  associated with $L_t$. For this we
assume that for every $\eps>0$, there exists a $\la>0$ so that \aa{
\int_{(-\eps,\eps)^c} \exp(\la|x|)\, \nu(dx) < \infty. } The latter
assumption guaranties that \aa{ \int_\Rnu |x|^i \, \nu(dx) < \infty
\quad \text{for} \; i=2,3,\ldots. } It was shown in
\cite{WSchoutens} that under the above assumptions one can introduce
the power jump processes and the related Teugels maringales.
Futhermore, it was shown that the strong orthogonalization procedure
can be applied to the Teugels martinagles and that the
orthonormalization of the Teugels martingales corresponds to the
orthonormalization of the polynomials $1,x, x^2, \ldots$ with
respect to the measure $x^2\nu(dx) + a^2\dl_0(dx)$, where the
parameter $a\in\Rnu$ is defined in Lemma \ref{hi-rep} below.
As in \cite{nualart}, by  $\{q_{i}(x)\}$ we denote the system of
orthonormalized polynomials such that $q_{i-1}(x)$ corresponds to
$H^\i_t$. Also, we define the polynomial $p_i(x) = x q_{i-1}(x)$. We
refer to \cite{WSchoutens} for details on the Teugels martingales
and their orthogonalization procedure. In the following, Lemma
\ref{hi-rep} below will be usefull.
\begin{Lem}
\lb{hi-rep} The process $H^\i_t$ can be represented as follows: \aa{
H_t^\i= q_{i-1}(0) B^\lambda(t) + \int_\RR p_i(x)\tilde{N}(t,dx), }
 where $B^\lambda(t) = \sum_{i=1}^N \lambda_i B_i (t)$ with $\la^T\la=a$, $\la_i\in\Rnu$,
 $\{B_i(t)\}_{i=1}^N$ are independent real-valued Brownian motions, and $\td N(t,A)$ is the compensated
 Poisson random measure that corresponds to the Poisson point process $\Dl L_t$.
\end{Lem}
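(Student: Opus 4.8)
The plan is to trace the explicit construction of the orthogonalized Teugels martingales back to the L\'evy--It\^o decomposition of $L_t$ and to exploit the isometry, established in~\cite{nualart1,WSchoutens}, between the orthogonalization of the martingales and that of the monomials $1,x,x^2,\dots$ First I would recall that the power jump process of order $i$ is $X^\i_t=\sum_{0<s\le t}(\Dl L_s)^i$ for $i\ge 2$ and $X^{(1)}_t=L_t$, and that the (non-orthogonalized) Teugels martingale is $Y^\i_t=X^\i_t-E[X^\i_t]$. Under the standing exponential-moment assumption all moments of $L_1$ are finite, so the L\'evy--It\^o decomposition compensates every jump and gives, for $i\ge 2$,
\[
Y^\i_t=\int_0^t\!\!\int_\RR x^i\,\td N(ds,dx),
\]
a purely discontinuous integral, while
\[
Y^{(1)}_t=B^\lambda(t)+\int_0^t\!\!\int_\RR x\,\td N(ds,dx)
\]
is the only Teugels martingale carrying the Gaussian part $B^\lambda(t)$ of $L_t$.

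The second step is the key structural fact. The predictable bracket of two Teugels martingales satisfies $\langle Y^\i,Y^\j\rangle_t = t\big(\int_\RR x^{i+j}\nu(dx)+a^2\mathbf 1_{\{i=j=1\}}\big)$, and this coincides with $t\int_\RR x^{i-1}x^{j-1}\,(x^2\nu(dx)+a^2\dl_0(dx))$; hence the assignment $x^{j-1}\mapsto Y^\j_t$ is, up to the factor $t$, an isometry from the polynomials with the inner product induced by $x^2\nu(dx)+a^2\dl_0(dx)$ onto the linear span of the Teugels martingales equipped with the bracket inner product. Because a Gram--Schmidt orthonormalization is determined entirely by these inner products, it commutes with the isometry: if $q_{i-1}(x)=\sum_{j=1}^{i}\al_{ij}\,x^{j-1}$ denotes the orthonormal polynomial corresponding to $H^\i_t$, then $H^\i_t=\sum_{j=1}^{i}\al_{ij}\,Y^\j_t$.

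It then remains to substitute the representations from the first step. Only the $j=1$ term contributes a Gaussian component, with coefficient $\al_{i1}=q_{i-1}(0)$, producing $q_{i-1}(0)\,B^\lambda(t)$; collecting the jump integrals and using linearity of the stochastic integral, the integrand becomes $\sum_{j=1}^{i}\al_{ij}\,x^{j}=x\,q_{i-1}(x)=p_i(x)$, so the discontinuous part equals $\int_\RR p_i(x)\,\td N(t,dx)$, and adding the two pieces yields the claimed formula. The main obstacle, and the place where care is needed, is the second step: verifying that the atom $a^2\dl_0(dx)$ in the measure exactly accounts for the Gaussian bracket in the $i=j=1$ case and that the orthonormalization genuinely transports across the isometry, so that the polynomial coefficients $\al_{ij}$ are precisely those appearing in $H^\i_t$. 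Everything else is bookkeeping on the L\'evy--It\^o decomposition.
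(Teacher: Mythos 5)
Your argument is correct, but it follows a genuinely different route from the paper's proof. The paper starts from the explicit representation of $H^{(i)}_t$ already established in the cited Nualart--Schoutens article, namely $H_t^{(i)} = q_{i-1}(0)L_t + \sum_{0<s\le t}\tilde p_i(\Delta L_s) - tE\big[\sum_{0<s\le 1}\tilde p_i(\Delta L_s)\big] - tq_{i-1}(0)E[L_1]$ with $\tilde p_i(x)=p_i(x)-xq_{i-1}(0)$; it then splits $L_t$ into its continuous and jump parts and regroups the compensators, so the whole proof is a short bookkeeping computation. You instead rebuild $H^{(i)}_t$ from scratch: you represent each Teugels martingale $Y^{(j)}$ via the (fully compensated) L\'evy--It\^o decomposition, verify the bracket identity $\langle Y^{(i)},Y^{(j)}\rangle_t = t\int_{\mathbb{R}} x^{i-1}x^{j-1}\,\big(x^2\nu(dx)+a^2\delta_0(dx)\big)$, and transport the Gram--Schmidt coefficients of $q_{i-1}$ across the resulting isometry to get $H^{(i)}_t=\sum_{j=1}^{i}\alpha_{ij}Y^{(j)}_t$, after which the identification $\alpha_{i1}=q_{i-1}(0)$ and $\sum_j\alpha_{ij}x^{j}=xq_{i-1}(x)=p_i(x)$ gives the claim (the sum is finite, so exchanging it with the stochastic integral is just linearity). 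Your route buys self-containedness and makes the mechanism transparent --- it only uses the correspondence between the orthogonalization of the Teugels martingales and that of the polynomials with respect to $x^2\nu(dx)+a^2\delta_0(dx)$, which the paper itself quotes from Schoutens --- at the cost of re-deriving the bracket structure and invoking the moment bounds to justify compensating all jumps; the paper's route is shorter because it leans on a stronger ready-made formula from the literature.
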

\begin{proof}
We will use the representation below for $H^\i_t$ obtained in
\cite{nualart}: \aa{ H_t^\i = q_{i-1}(0)L_t + \sum_{0 < s \leq t}
\tilde{p}_i(\Delta L_s) - t E \bigg[ \sum_{0 < s \leq 1}
\tilde{p}(\Delta L_s)\bigg] - tq_{i-1}(0)E[L_1], } where $\td p_i(x)
= p_i(x) - xq_{i-1}(0)$, and $E$ is the expectation with respect to
$P$.
 Taking into account  that $L_t= L_t^c + \sum_{0 \lt s \lt t} \Delta L_s$, where $L^c_t$
 is the continuous part of $L_t$, we obtain:
 \aa{
 H_t^\i&= q_{i-1}(0)L_t^c + \sum_{0 < s \lt t}p_i(\Delta L_s)
 - t E\bigg[\sum_{0 < s \lt 1} \tilde{p}_i (\Delta L_s)\bigg] -tq_{i-1}(0)E[L_1]\\
 &= q_{i-1}(0)\Big[L_t^c - E[L_t^c]\Big] + \sum_{0 < s \lt t}p_i(\Delta L_s) - E\bigg[ \sum_{0 < s \lt t}p_i(\Delta L_s)\bigg]\\
 &= q_{i-1}(0)B^\lambda (t) +  \int_\RR p_i(x) \tilde{N} (t, dx).
 }
\end{proof}
In the sequence, the following lemma will be frequently applied:
\begin{lem}
It holds that \label{pi-pro} \aa{ \int_\Rnu p_i(x) p_j(x) \nu(dx) =
\dl_{ij} - a^2 q_{i-1}(0)q_{j-1}(0). }
\end{lem}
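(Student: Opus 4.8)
The plan is to read off the result directly from the orthonormality of the polynomials $\{q_i\}$. Recall from the discussion preceding Lemma \ref{hi-rep} that the system $\{q_i(x)\}$ is obtained by orthonormalizing $1, x, x^2, \ldots$ with respect to the measure $\mu(dx) = x^2\nu(dx) + a^2\dl_0(dx)$, and that $p_i(x) = x\,q_{i-1}(x)$. Thus the starting point is the orthonormality relation
\aa{
\int_\Rnu q_{i-1}(x)\,q_{j-1}(x)\,\big(x^2\nu(dx) + a^2\dl_0(dx)\big) = \dl_{ij},
}
where the index shift $i \mapsto i-1$ reflects that $q_{i-1}$ is the polynomial associated with $H^\i_t$.

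The next step is simply to split this integral into its two pieces. The point mass $a^2\dl_0$ contributes $a^2 q_{i-1}(0)\,q_{j-1}(0)$, while in the $x^2\nu(dx)$ part I would absorb the two factors of $x$ into the polynomials, using the definition $p_i(x)=x\,q_{i-1}(x)$, to get
\aa{
\int_\Rnu q_{i-1}(x)\,q_{j-1}(x)\,x^2\,\nu(dx)
= \int_\Rnu \big(x\,q_{i-1}(x)\big)\big(x\,q_{j-1}(x)\big)\,\nu(dx)
= \int_\Rnu p_i(x)\,p_j(x)\,\nu(dx).
}
Substituting both pieces back into the orthonormality relation gives $\int_\Rnu p_i(x)p_j(x)\,\nu(dx) + a^2 q_{i-1}(0)q_{j-1}(0) = \dl_{ij}$, and rearranging yields exactly the claimed identity.

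There is no genuine obstacle here: the entire content is the observation that multiplying by $x$ converts the weight $x^2\nu(dx)$ into $\nu(dx)$, together with correct evaluation of the atomic part at $x=0$. The only place where care is needed is bookkeeping — keeping the index convention $q_{i-1}\leftrightarrow H^\i$ consistent so that the Kronecker symbol comes out as $\dl_{ij}$ rather than $\dl_{i-1,j-1}$ — and confirming that the integrals involved are finite, which is guaranteed by the exponential-moment assumption on $\nu$ stated in Section 2 that ensures all moments $\int_\Rnu |x|^i\,\nu(dx)$ exist.
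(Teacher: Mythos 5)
Your proof is correct and follows exactly the route the paper takes: the paper disposes of this lemma in one line as "a straightforward corollary of the orthonormality of $q_{i-1}(x)$ with respect to the measure $x^2\nu(dx)+a^2\dl_0(dx)$," and your argument simply writes out that computation (splitting off the atom at $0$ and absorbing $x^2$ into $p_i p_j$) in full detail.
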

\begin{proof}
The proof is a straightforward corollary of the orthonormality of
$q_{i-1}(x)$ with respect to the measure $x^2\nu(dx) +
a^2\dl_0(dx)$.
\end{proof}
We will need an analog of Lemma 5 from \cite{nualart} which was proved in the latter article 
for a pure-jump $L_t$.  We obtain this result for the case when
$L_t$ has both the continuous and the pure-jump parts.
\begin{Lem}
\lb{lemma5}
Let $h: \Omega \times [0,T] \x \Rnu \rightarrow \Rnu^n$ be a random
function
satisfying \aaa{ E \int_0^T |h(s,y)|^2 \nu(dy)< \infty.
\label{condition5} }
 Then, for each $t \in [0,T]$,
\aa { \sum_{t < s \leq T} h(s,& \Delta L_s) = \sum_{i=1}^{\infty}
\int_t^T \int_\Rnu \nu(dy) h(s,y)p_i(x)\, dH_s^\i+\int_t^T \int_\RR
h(s,y) \nu(dy) ds. }
\end{Lem}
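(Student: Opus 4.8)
The plan is to reduce the stated identity to the martingale representation of Lemma~\ref{hi-rep} together with the completeness of the orthonormalized polynomials $\{q_{i-1}\}$ in $L^2(\mu)$, where $\mu(dy)=y^2\nu(dy)+a^2\dl_0(dy)$. First I would rewrite the left-hand side as an integral against the jump measure of $L$, namely $\sum_{t<s\leq T}h(s,\Dl L_s)=\int_t^T\int_\Rnu h(s,y)\,N(ds,dy)$, where $N$ is the Poisson random measure with compensator $\nu(dy)\,ds$. Compensating (the compensated form being the one controlled by \rf{condition5}), $\int_t^T\int_\Rnu h\,N(ds,dy)=\int_t^T\int_\Rnu h(s,y)\,\td N(ds,dy)+\int_t^T\int_\Rnu h(s,y)\,\nu(dy)\,ds$, and the last term is exactly the second term in the assertion. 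Everything thus reduces to proving $\int_t^T\int_\Rnu h(s,y)\,\td N(ds,dy)=\sum_{i=1}^\infty\int_t^T\phi_i(s)\,dH_s^\i$, where $\phi_i(s)=\int_\Rnu h(s,y)p_i(y)\,\nu(dy)$.

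For the crux I would fix $(s,\om)$ and introduce $f_s(y)=h(s,y)/y$ for $y\ne0$, extended by $f_s(0)=0$. Since $y^2\nu(dy)=\mu(dy)$ on $\Rnu\setminus\{0\}$, one has the algebraic identity $\|f_s\|_{L^2(\mu)}^2=\int_\Rnu|h(s,y)|^2\nu(dy)$, finite for $ds\ox P$-a.e.\ $(s,\om)$ by \rf{condition5}, and moreover $\phi_i(s)=\<f_s,q_{i-1}\>_{L^2(\mu)}$ is precisely the $i$-th Fourier coefficient of $f_s$ in the orthonormal system $\{q_{i-1}\}$. By the exponential-moment assumption of Section~2 the polynomials are complete in $L^2(\mu)$, so $f_s=\sum_i\phi_i(s)q_{i-1}$ in $L^2(\mu)$. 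This single expansion yields both identities needed after substituting $dH_s^\i=q_{i-1}(0)\,dB^\la(s)+\int_\Rnu p_i(y)\,\td N(ds,dy)$ from Lemma~\ref{hi-rep}: multiplying the expansion by $y$ gives $\sum_i\phi_i(s)p_i(y)=h(s,y)$ in $L^2(\nu)$, so the compensated-jump integrands match; while evaluating the expansion at the atom $y=0$ — legitimate because $L^2(\mu)$-convergence controls the value at the point mass $a^2\dl_0$ — gives $\sum_i\phi_i(s)q_{i-1}(0)=f_s(0)=0$, so the continuous martingale part $B^\la$ cancels.

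Finally I would settle convergence. By Parseval, $\sum_i|\phi_i(s)|^2=\|f_s\|_{L^2(\mu)}^2=\int_\Rnu|h(s,y)|^2\nu(dy)$, whence $\sum_i E\int_t^T|\phi_i(s)|^2\,ds=E\int_t^T\int_\Rnu|h|^2\nu(dy)\,ds<\infty$. The orthonormality $E[H_t^\i H_t^\j]=\dl_{ij}t$ then makes $\sum_i\int_t^T\phi_i\,dH^\i$ a Cauchy, hence convergent, series in $L^2(\Om)$; and the It\^o isometry for $\td N$ together with the $L^2(ds\ox\nu\ox P)$-convergence $\sum_{i\leq M}\phi_i(s)p_i(y)\to h(s,y)$ identifies its limit with $\int_t^T\int_\Rnu h\,\td N(ds,dy)$.

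I expect the main obstacle to be the rigorous treatment of the atom at the origin, since it is exactly the term $a^2\dl_0$ in $\mu$ that simultaneously produces the correct Parseval constant $\int_\Rnu|h|^2\nu(dy)$ and forces the Brownian coefficient to vanish. Making the ``evaluation at $0$'' precise — via $L^2(\mu)$-convergence at the point mass, valid when $a\ne0$ and vacuous when $a=0$, which recovers the pure-jump case of \cite{nualart} — is the delicate point distinguishing this lemma from its pure-jump predecessor.
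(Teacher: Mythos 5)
Your argument is correct, but it reaches the representation by a genuinely different route than the paper. The paper performs the same first step (it sets $M_t=\sum_{0<s\leq t}h(s,\Dl L_s)-\int_0^t\int_\Rnu h(s,y)\,\nu(dy)\,ds=\int_0^t\int_\Rnu h\,\td N(ds,dy)$), but then simply invokes the predictable representation theorem of Nualart--Schoutens to get $M_t=\sum_i\int_0^t\ffi_i(s)\,dH^\i_s$ for some predictable $\ffi_i$, and identifies $\ffi_i(s)=\int_\Rnu h(s,y)p_i(y)\,\nu(dy)$ by computing the bracket $\<M,H^\i\>_t$ from $\<H^\i,H^\j\>_t=t\,\dl_{ij}$ and Lemma~\ref{hi-rep}. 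You instead build the representation by hand: you expand $f_s(y)=h(s,y)/y$ in the orthonormal system $\{q_{i-1}\}$ of $L^2(y^2\nu(dy)+a^2\dl_0(dy))$, use Parseval for the convergence of the series of stochastic integrals, and identify the limit through the It\^o isometries, with the atom at $0$ forcing $\sum_i\phi_i(s)q_{i-1}(0)=0$ so that the $B^\la$-part cancels. The paper's route is shorter because the delicate convergence and completeness issues are delegated to the cited representation theorem; yours is more self-contained and makes visible exactly why the Brownian component disappears (a fact the paper gets implicitly, since in its bracket computation $\<\,\int h\,\td N,\,B^\la\,\>=0$), essentially re-deriving the relevant piece of that theorem from the same underlying ingredient, namely density of polynomials in $L^2(\mu)$ under the exponential-moment assumption. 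Two small points you should spell out when writing it up: the upgrade from the pointwise-in-$(s,\om)$ convergences to convergence in $L^2(ds\ox\nu\ox P)$ (and the vanishing of $E\int_t^T\big|\sum_{i\leq M}\phi_i(s)q_{i-1}(0)\big|^2ds$) requires a dominated-convergence argument, with dominating function $\int_\Rnu|h(s,y)|^2\nu(dy)$ supplied by Bessel's inequality; and the completeness of $\{q_{i-1}\}$ in $L^2(\mu)$ deserves an explicit reference to \cite{nualart1}, where it is established from the exponential-moment condition. Neither point is a gap in substance.
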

\begin{proof}
Note that \aaa{ \lb{Mt} M_t= \sum_{0 \leq s \leq t} h(s,\Delta L_s)
- \int_0^t \int_\RR h(s,y) \nu(dy) ds = \int_0^t\int_\Rnu h(s,x)\,
\td N(ds,dx) } is a square integrable martingal, i.e. $\sup_{t\in
[0,T]} E|M_t|^2 < \infty$, by \rf{condition5}. By the predictable
representation theorem \cite{nualart1}, there exist predictable
processes $\ffi_i$ with $E\Big[\int_0^T \sum_{i=1}^\infty
|\ffi_i|^2\Big]<\infty$ and such that $M_t = \sum_{i=1}^{\infty}
\int_0^t \ffi_i(s) dH^\i_s$. Since $\left\langle H^\i,H^\j
\right\rangle_t = t\,\delta_{ij}$ \cite{WSchoutens}, then \aa{
\left\langle M,H^\i \right\rangle_t= \int_0^t \ffi_i(s) ds. } On the
other hand, by \rf{Mt} and Lemma \ref{hi-rep}, \aa{
&\< M,H^\i \>_t\\
&=\left<\ \int_0^t \int_{\RR} h(s,x) \td N(ds,dx),
q_{i-1}(0)B^\la_t + \int_0^t \int_{\RR} p_i(x)\tilde{N}(dt,dx) \right\>_t \\
&= \int_0^t\int_\RR h(s,x) p_i(x)\nu(dx) ds. }
 This implies that $\ffi_i(s)= \int_\RR h(s,y)p_i(y) \nu(dy)$,
 and therefore,
 \aa{
 \sum_{0 \leq s \leq t} h(s,\Delta L_s) - \int_0^t \int_\RR h(s,y) \nu(dy) ds =
 \sum_{i=1}^{\infty}\int_0^t\int_\RR &h(s,y)p_i(y) \nu(dy)dH_s^\i.
 }
 \end{proof}


%
\section*{3 FBSDEs and the associated PIDE}

\setcounter{section}{3}

\setcounter{equation}{0}

\setcounter{Thm}{0}

\subsection*{3.1 Problem Formulation and Assumptions}

Consider the FBSDEs: \aaa{ \lb{fbsde1}
\begin{cases}
X_{t} = x + \int_{0}^{t} f(s, X_{s}, Y_{s},Z_{s})\: ds +
\int_{0}^{t} \sigma(s,X_{s-}, Y_{s-})\: dH_{s},
\\
Y_{t} = h(X_{T}) + \int_{t}^{T} g(s, X_{s}, Y_{s},Z_{s})\: ds - \int_{t}^{T} Z_{s} \: dH_{s}, \\
t\in [0,T],
\end{cases}
} where \aa{
&f: [0,T] \x \Rnu^P \x \Rnu^Q \x (\Rnu^Q\x \ell_2) \to \Rnu^P,\\
&\sg:[0,T] \x \Rnu^P \x \Rnu^Q \to \Rnu^P \x \ell_2, \\
&g: [0,T] \x \Rnu^P \x \Rnu^Q \x (\Rnu^Q\x \ell_2) \to \Rnu^Q,\\
&h: \Rnu^P \to \Rnu^Q } are Borel-measurable functions. Here, the
stochastic integrals \aa{ \int_{0}^{t} \sigma(s,X_{s-}, Y_{s-})\,
dH_{s} \quad \text{and} \quad \int_{t}^{T} Z_{s} \,dH_{s} } are
shorthand notation for \aa{ \sum_{i=1}^\infty \int_{0}^{t}
\sg_i(s,X_{s-}, Y_{s-})\, dH^\i_{s}\quad \text{and} \quad
\sum_{i=1}^\infty\int_{t}^{T} Z^i_{s} \, dH^\i_{s} } respectively,
where $Z_s = \{Z^i_s\}_{i=1}^\infty$, $\sg =
\{\sg_i\}_{i=1}^\infty$, $\sg_i:[0,T] \x \Rnu^P \x \Rnu^Q \to
\Rnu^P$. The solution to FBSDEs \rf{fbsde1}, when exists, will be an
$\Rnu^P \x \Rnu^Q \x (\Rnu^Q \x \ell_2)$-valued $\mc F_t$-adapted
triple $(X_t,Y_t,Z_t)$ satisfying \aa{ E\int_0^T \Big(|X_t|^2 +
|Y_t|^2 + \sum_{i=1}^\infty |Z^i_t|^2\Big) \,dt <\infty, } and
verifying \rf{fbsde1} $P$-a.s.. The latter includes the existence of
the stochastic integrals in \rf{fbsde1}. Implicitly, we are assuming
that $X_t$ and $Y_t$ have left limits, and that $Z_t$ is $\mc
F_t$-predictable. So in fact, we are searching for c\`adl\`ag
$(X_t,Y_t)$, which will guarantee the existence of $X_{t-}$ and
$Y_{t-}$, and predictable $Z_t$.

We associate to \rf{fbsde1} the following final value problem for a
PIDE:
 \begin{gather}
\label{PIDE}
\left\{
\begin{array}{l}
\partial_t \theta(t,x) +  f^k(t,x,\theta(t,x),\te^{(1)}(t,x))\, \partial_{k} \theta(t,x)
+ \beta^{kl}(t,x,\theta(t,x))\, \partial^2_{kl}\, \theta(t,x)\\
- \int_\Rnu \big[\te\big(t,x + \delta(t,x,\theta(t,x),y)\big) - \te(t,x)- \pl_k \te(t,x)\dl^k(t,x,\theta(t,x),y)\big]\nu(dy)\\
 +g(t, x,\theta(t,x),\te^{(1)}(t,x)) = 0,\\
\theta(T,x) = h(x)
\end{array}\right.
\end{gather}
 with $\te^{(1)}: [0,T] \x \Rnu^P \to \Rnu^Q \x \ell_2$, \mmm{
\lb{te1}
 \te^{(1)}_i(t,x)=
 \int_\Rnu [\theta\big(t,x + \dl(t,x,\theta(t,x),y)\big) - \te(t,x)] p_i(y)\, \nu(dy)\\
 +  c^k_i(t,x,\te(t,x))\,\pl_{k}\te(t,x).
} The connection between $\beta^{kl}$, $\dl$, $c^k_i$ and the
coefficients of FBSDEs \rf{fbsde1} is the following: \aaa{
&\dl(t,x,y,y') = \sum_{i=1}^\infty \sg_i(t,x,y)p_i(y'),\lb{dl}\\
& \beta^{kl}(t,x,y) = \frac{a^2}2\,\Big(\sum_{i=1}^\infty
\sg^k_i(t,x,y) q_{i-1}(0)\Big)
\Big( \sum_{j=1}^\infty \sg^l_j(t,x,y) q_{j-1}(0)\Big),\lb{akl}\\
& c^k_i(t,x,y) = \sg^k_i(t,x,y) - \int_\Rnu \dl^k(t,x,y,y')
p_i(y')\,\nu(dy').\lb{cik} } To guarantee the existence of the above
functions we will make the assumption:
\begin{itemize}
\item[\bf A0] \quad
$\sum_{i=1}^\infty q_{i-1}(0)^2 < \infty$.
\end{itemize}
Since $\sg^k = \{\sg^k_i\}_{i=1}^\infty$ takes values in $\ell_2$,
A0 immediately guarantees the convergence of the both multipliers in
\rf{akl}. The convergence of the series in \rf{dl} is understood  in
$L_2(\nu(dy'))$ for each fixed $(t,x,y)$. Moreover, it holds that
\mmm{ \lb{dl-claim} \int_\Rnu\Big|\sum_{i=1}^\infty
\sg_i(t,x,y)p_i(y')\Big|^2\nu(dy') \\= \|\sg(t,x,y)\|^2_{\Rnu^P\x
\ell_2} - a^2 \Big|\sum_{i=1}^\infty \sg_i(t,x,y)\,
q_{i-1}(0)\Big|^2. } Indeed, applying Lemma \ref{pi-pro} for each
fixed $N$, we obtain: \mm{ \int_\Rnu \Big|\sum_{i=1}^N
\sg_i(t,x,y)p_i(y')\Big|^2 \nu(dy')
= \sum_{i,j=1}^N (\sg_i,\sg_j) \int_\Rnu p_i(y')p_j(y')\nu(dy')\\
= \sum_{i=1}^N |\sg_i|^2 - a^2 \Big|\sum_{i=1}^N \sg_i
q_{i-1}(0)\Big|^2. } Now letting $N$ tend to infinity, we obtain
\rf{dl-claim}.
\begin{lem}
\lb{assers} The following assertions hold:
\begin{enumerate}
\item \lb {cik1} $c^k_i(t,x,y) = a^2\, q_{i-1}(0) \sum_{j=1}^\infty \sg^k_j(s,x,y)q_{j-1}(0)$.
\item \lb{asr2} For each $k$, $c^k = \{c^k_i\}_{i=1}^\infty$ takes values in $\ell_2$.
\item \lb {asr3} For each $(s,x,y)$, $\Big\{\int_\Rnu \dl(s,x,y,y')p_i(y')\, \nu(dy')\Big\}_{i=1}^\infty \in \ell_2$.
\end{enumerate}
\end{lem}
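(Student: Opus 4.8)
The plan is to establish assertion~(\ref{cik1}) first by a direct computation, and then to read off (\ref{asr2}) and (\ref{asr3}) as essentially immediate consequences.

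Starting from the definition \rf{cik} of $c^k_i$, I would insert the series \rf{dl} for $\dl^k$ into the integral and aim to show
\[
\int_\Rnu \dl^k(t,x,y,y')\,p_i(y')\,\nu(dy')
= \sum_{j=1}^\infty \sg^k_j(t,x,y)\int_\Rnu p_j(y')p_i(y')\,\nu(dy').
\]
Applying Lemma~\ref{pi-pro} to each term on the right, so that $\int_\Rnu p_j p_i\,\nu(dy') = \dl_{ij} - a^2 q_{i-1}(0)q_{j-1}(0)$, the right-hand side collapses to $\sg^k_i - a^2 q_{i-1}(0)\sum_{j=1}^\infty \sg^k_j q_{j-1}(0)$. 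Subtracting this from $\sg^k_i$, as prescribed by \rf{cik}, cancels the free $\sg^k_i$ term and leaves exactly the formula claimed in (\ref{cik1}).

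The one genuinely non-routine point, and hence the main obstacle, is the interchange of summation and integration above. To justify it I would argue that the linear functional $\phi \mapsto \int_\Rnu \phi(y')p_i(y')\,\nu(dy')$ is continuous on $L_2(\nu)$: indeed $p_i \in L_2(\nu)$, since Lemma~\ref{pi-pro} with $j=i$ gives $\int_\Rnu p_i^2\,\nu(dy') = 1 - a^2 q_{i-1}(0)^2 < \infty$, so Cauchy--Schwarz bounds the functional. The partial sums $\sum_{j=1}^N \sg^k_j p_j$ converge to $\dl^k$ in $L_2(\nu)$ --- this is the $L_2(\nu)$-convergence noted in connection with the identity \rf{dl-claim} --- and the functional may therefore be passed through the limit, which is precisely the claimed interchange.

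Finally, I would deduce the remaining two assertions. For (\ref{asr2}), observe from (\ref{cik1}) that $c^k_i = a^2 q_{i-1}(0)\,\Sigma_k$, where $\Sigma_k = \sum_{j=1}^\infty \sg^k_j q_{j-1}(0)$ is finite (by Cauchy--Schwarz, using $\sg^k \in \ell_2$ together with A0) and independent of $i$; hence $\sum_{i=1}^\infty |c^k_i|^2 = a^4|\Sigma_k|^2\sum_{i=1}^\infty q_{i-1}(0)^2 < \infty$ by A0, so $c^k \in \ell_2$. For (\ref{asr3}), rearranging \rf{cik} componentwise gives $\int_\Rnu \dl(s,x,y,y')p_i(y')\,\nu(dy') = \sg_i - c_i$; since $\{\sg_i\}_{i=1}^\infty \in \Rnu^P\x\ell_2$ and $\{c_i\}_{i=1}^\infty \in \ell_2$ by (\ref{asr2}), the difference lies in $\ell_2$, completing the proof.
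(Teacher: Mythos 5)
Your proposal is correct and follows essentially the same route as the paper: both arguments pass the $L_2(\nu(dy'))$-limit of the partial sums $\sum_{j=1}^N \sg^k_j p_j$ through the pairing with $p_i$ (the paper via convergence of $\int_\Rnu \dl_N p_i\,\nu(dy')$, you via continuity of the functional, which is the same Cauchy--Schwarz point), apply Lemma \ref{pi-pro} at the level of partial sums, and then read off assertions (\ref{asr2}) and (\ref{asr3}) from A0 and \rf{cik}. Your write-up merely makes explicit the details the paper leaves implicit, so there is nothing further to add.
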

\begin{proof}
Define $\dl_N(s,x,y,y')= \sum_{j=1}^N \sg_j(s,x,y)p_j(y')$. By what
was proved, for each $(s,x,y)$, $\dl_N(s,x,y,\fdot) \to
\dl(s,x,y,\fdot)$ in $L_2(\nu(dy'))$, and therefore, for each $i$,
and for each $(s,x,y)$, \aa{ \int_\Rnu
\dl_N(s,x,y,y')p_i(y')\nu(dy') \to \int_\Rnu
\dl(s,x,y,y')p_i(y')\nu(dy') } as $N\to\infty$. On the other hand,
by Lemma \ref{pi-pro}, \aa{ \int_\Rnu \dl_N(s,x,y,y') p_i(y')
\nu(dy') = \sg_i(s,x,y) - a^2\, q_{i-1}(0) \sum_{j=1}^N
\sg_j(s,x,y)q_{j-1}(0). } Comparing the last two relations, we
obtain that \aaa{ \lb{dlpi} \int_\Rnu \dl(s,x,y,y')p_i(y')\nu(dy') =
\sg_i(s,x,y) - a^2\, q_{i-1}(0) \sum_{j=1}^\infty
\sg_j(s,x,y)q_{j-1}(0) } which proves Assertion \ref{cik1}.
Assertion \ref{asr2} is implied by Assumption A0 and Assertion
\ref{cik1}. Finally, \rf{cik} implies Assertion \ref{asr3}.
\end{proof}
The heuristic argument behind PIDE \eqref{PIDE} assumes the
connection $Y_t = \theta(t,X_t)$ between the solution processes
$X_t$ and $Y_t$ to \eqref{fbsde1} via a $\C^{1,2}$-function
$\theta$. It\^o's formula applied to $\theta(t,X_t)$ at points $t$
and $T$ leads to another BSDE which has to be the same as the given
BSDE in \eqref{fbsde1}. Thus we ``guess'' PIDE \rf{PIDE} by equating
the drift and stochastic terms of these two BSDEs.

\subsection*{3.2 Solvability of the PIDE}

We solve Problem \rf{PIDE} for a particular case when
the functions $f(t,x,y,z)$ and $g(t,x,y,z)$ do not depend on $z$,
and for a short time duration $T$.
Thus, we are dealing with the following final value problem for a
PIDE:
\aaa{ \label{evolution}
\begin{cases}
&\pl_t \te(t, x) = -[A(t,\te(t,\cdot))\te](x) + g(t,x,\te(t,x)), \\ 
&\te(T,x) = h(x),
\end{cases}
} where $A(t,\rho(t,\cdot))$ is a partial integro-differential
operator given by
\mmm{
\lb{generator}
[A(t,\rho(t,\cdot))\te](x)  
=f^k(t,x,\rho(t,x)) \, \pl_{k}
\te(t,x)
+ \beta^{kl}(t,x,\rho(t,x))\, \pl^2_{kl}\, \te(t,x)\\
+ \int_\Rnu \big[\te\big(t,x + \dl(t,x,\rho(t,x),y)\big)-\te(s,x)
-\pl_k \te(t,x)\dl^k(t,x,\rho(t,x),y)\big] \, \nu(dy).
}
with the domain $D(A(t,\rho(t,\cdot)))=\C_b^{2}(\Rnu^P\to\Rnu^Q)$,
the space of bounded continuous functions $\Rnu^P\to\Rnu^Q$ whose
first and second order derivatives are also bounded. 
We assume the following:
\begin{enumerate}
\item[\bf A1] Functions $f$, $g$, $\sg$, and $h$ are bounded and have bounded 
spatial derivatives of the first and the second order.
\end{enumerate}
\begin{lem}
Let A0 and A1 be fulfilled. Then
$A(t,\rho(t,\cdot))$, defined by \rf{generator}, is a generator of a
strongly continuous semigroup on $\C_b(\Rnu^P\to\Rnu^Q).$
\end{lem}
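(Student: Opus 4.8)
The plan is to realize the operator $A=A(t,\rho(t,\cdot))$, for fixed $t$, as the infinitesimal generator of a jump-diffusion Markov process and to build the semigroup probabilistically. Since $f^k$, $\beta^{kl}$ and $\dl$ enter \rf{generator} identically on each coordinate of the $\Rnu^Q$-valued argument, the operator acts componentwise, so it suffices to treat the scalar case $Q=1$ and then apply the construction coordinate by coordinate. Writing $b(x)=f(t,x,\rho(t,x))$ and $c(x,y)=\dl(t,x,\rho(t,x),y)$, and noting from \rf{akl} that the matrix $\beta(t,x,\rho(t,x))$ is positive semidefinite (a nonnegative multiple of an outer product), I would choose a coefficient $\Sigma(x)$ with $\Sigma(x)\Sigma(x)^\top$ reproducing the second-order part $\beta^{kl}\pl^2_{kl}$ of \rf{generator}.

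I would then consider the stochastic differential equation
\[
\xi^x_s = x + \int_0^s b(\xi^x_r)\,dr + \int_0^s \Sigma(\xi^x_r)\,dW_r + \int_0^s\!\!\int_\Rnu c(\xi^x_{r-},y)\,\td N(dr,dy),
\]
where $W$ corresponds to the continuous component $B^\la$ of Lemma \ref{hi-rep} and $\td N$ is its compensated Poisson measure. By Assumption A1 the coefficients $f$ and $\sg$ are bounded with bounded first derivatives, hence globally Lipschitz, so $b$ and $\Sigma$ are Lipschitz; moreover \rf{dl-claim} gives $\int_\Rnu|c(x,y)|^2\,\nu(dy)=\|\sg(t,x,\rho(t,x))\|^2-a^2\big|\sum_i\sg_i(t,x,\rho(t,x))\,q_{i-1}(0)\big|^2$, which by A0 and A1 is bounded uniformly in $x$ and Lipschitz in $x$. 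Standard existence and uniqueness theory for L\'evy-driven SDEs then furnishes a unique c\`adl\`ag strong solution $\xi^x_s$ depending measurably on $x$ and enjoying the Markov (flow) property, and I would set $T_s u(x)=E\,u(\xi^x_s)$.

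Next I would verify the semigroup axioms. Contractivity $\|T_su\|_\infty\leq\|u\|_\infty$ and positivity are immediate, the law $T_{s+r}=T_sT_r$ comes from the Markov property, and $T_s$ preserves continuity by $L_2$-continuous dependence of $\xi^x_s$ on $x$. To identify $A$ as the generator I would apply It\^o's formula for jump-diffusions to $u(\xi^x_s)$ with $u\in\C_b^2=D(A)$; the local-martingale part has zero expectation because $\int|c(x,y)|^2\nu(dy)$ is finite by \rf{dl-claim}, and Dynkin's formula yields $\frac1s\big(T_su(x)-u(x)\big)=\frac1s\int_0^sT_r(Au)(x)\,dr$. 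The bound $|u(x+c)-u(x)-\nab u(x)\cdot c|\leq\frac12\|D^2u\|_\infty|c|^2$ together with the uniform $L_2(\nu)$-bound on $c$ shows $Au\in\C_b$, and hence $\frac1s(T_su-u)\to Au$, identifying the generator on the core $\C_b^2$.

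The genuinely delicate point is strong continuity. On all of $\C_b$ a Feller-type semigroup need not satisfy $\|T_su-u\|_\infty\to0$, since an arbitrary bounded continuous $u$ need not be uniformly continuous; this is also why one cannot simply add the jump term $Bu=\int[u(\cdot+c)-u-\nab u\cdot c]\,\nu(dy)$ as a bounded perturbation of the diffusion generator, as $B$ costs two derivatives and is only relatively bounded. The quantitative input that rescues the argument is the uniform moment estimate $\sup_x E|\xi^x_s-x|^2\leq Cs$, which follows from boundedness of $b$, $\Sigma$ and of $\int|c|^2\nu(dy)$; splitting the modulus $\omega_u$ into its value at a small scale plus a tail controlled by Chebyshev's inequality gives $|T_su(x)-u(x)|\leq E\,\omega_u(|\xi^x_s-x|)\to0$ uniformly in $x$ for every bounded \emph{uniformly} continuous $u$. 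Thus strong continuity holds on the closed subspace of bounded uniformly continuous functions, which contains the core $\C_b^2$ (each $\C_b^2$ function is Lipschitz, hence uniformly continuous); I would interpret the statement in this space, or equivalently invoke the Hille--Yosida--Ray theorem, checking the positive maximum principle on $\C_b^2$ and the range density of $\la-A$, this last being the main analytic obstacle precisely because of the relative boundedness of $B$.
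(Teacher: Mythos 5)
Your proposal takes essentially the same route as the paper: the paper likewise realizes $A(t,\rho(t,\cdot))$ as the generator of the jump-diffusion solving SDE \rf{SDE4g} (driven by the $H^\i$, equivalently by $B^\la$ and $\td N$ via Lemma \ref{hi-rep}), using the boundedness and Lipschitzness of the coefficients supplied by A0--A1 and then It\^o's formula to identify \rf{generator} as the generator of the associated semigroup. Your extra care about strong continuity on $\C_b$ versus the bounded uniformly continuous functions addresses a point the paper passes over silently, but it does not change the method.
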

\begin{proof}
Note that by Assertion \ref{cik1} of Lemma \ref{assers} and by
\rf{akl}, functions $c^k$ and $\beta^{kl}$ are bounded and Lipschitz
in the spatial variables. This implies that the SDE \aaa{ \lb{SDE4g}
dX^k_s = f^k(t,X_s,\rho(t,X_s)) +
\sum_{i=1}^\infty \sg^k_i(t,X_{s-},\rho(t,X_{s-}))dH^\i_s, } with
$X_t=x$, has a pathwise unique c\`adl\`ag adapted solution on $[t,T]$. The
existence and uniqueness of a solution to an SDE of type \rf{SDE4g}
will be proved in Paragraph 3.3. Now application of It\^o's
formula to $\ffi(X_s)$, where $\ffi$ is twice continuously
differetiable, shows that the operator \rf{generator} 
is the generator of the solution to SDE \rf{SDE4g}, and therefore,
it generates a strongly continuous semigroup on $\C_b(\Rnu^P\to
\Rnu^Q)$. 
\end{proof}
The common method to deal with problems of type \eqref{evolution} is
to fix a $\C_b^{1,2}$-function $\rho(t,x)$, and consider the
following non-autonomous evolution equation: \aaa{
\label{evolution1}
\begin{cases}
&\pl_t \te(t, x) = -[A(t,\rho(t,\cdot))\te](x) - g(t,x,\rho(t,x)),\\
&\te(T,x) = h(x).
\end{cases}
} 
By Assumption A1 and the results of \cite{Tanabe} and \cite{Guli}, there
exists a backward propagator $U(s,t,\rho)$, $0\lt s \lt
t \lt T$, so that \aa{ \te(t,x) = [U(t,T,\rho)h](x) + \int_t^T
[U(t,s,\rho)g(s,\fdot,\rho(s,\fdot))](x)\, ds. } We organize the
map \aaa{ \lb{map-phi} \Phi: \C_b([0,T]\x \Rnu^P\to \Rnu^Q)\to
\C_b([0,T]\x\Rnu^P\to \Rnu^Q),\; \rho \mto \theta, } and prove the
existence of a fixed point. 

Define $E= \C_b(\Rnu^P\to\Rnu^Q)$ and $D=\C_b^{2}(\Rnu^P\to\Rnu^Q)$.
\begin{lem}
\lb{fplem}
Let Assumptions A0 and A1 hold. Then,
there exists a constant $K>0$ that does not
depend on $s$, $t$, $\rho$, and $\rho'$, so that for any function $\ffi\in D$, 
\aa{
\sup_{s\in [t,T]} \|U(t,s,\rho)\ffi - U(t,s,\rho')\ffi\|_E
\lt K \, T \hspace{-1mm} \sup_{s\in [t,T]}\|\rho(s,x) - \rho'(s,x)\|_E\, \|\ffi\|_{D}.
}
\end{lem}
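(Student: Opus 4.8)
The plan is to represent the difference of the two backward propagators by a variation-of-constants (Duhamel) identity and then to estimate the resulting integrand in the $E$-norm, the whole difficulty being concentrated in the Lipschitz dependence of the generator \rf{generator} on its frozen argument $\rho$.

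First I would fix $\ffi\in D$ and, for $\tau\in[t,s]$, differentiate the two-parameter family $F(\tau)=U(t,\tau,\rho)\,U(\tau,s,\rho')\ffi$. Using the propagator relations $\pl_2 U(t,\tau,\rho)=U(t,\tau,\rho)A(\tau,\rho(\tau,\fdot))$ and $\pl_1 U(\tau,s,\rho')=-A(\tau,\rho'(\tau,\fdot))U(\tau,s,\rho')$, which hold on $D$ and follow from the evolution equation \rf{evolution1} together with the cocycle property $U(t,\tau)U(\tau,s)=U(t,s)$, one gets $F'(\tau)=U(t,\tau,\rho)\bigl[A(\tau,\rho(\tau,\fdot))-A(\tau,\rho'(\tau,\fdot))\bigr]U(\tau,s,\rho')\ffi$. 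Since $F(s)=U(t,s,\rho)\ffi$ and $F(t)=U(t,s,\rho')\ffi$, integrating over $[t,s]$ yields
\aa{
U(t,s,\rho)\ffi-U(t,s,\rho')\ffi=\int_t^s U(t,\tau,\rho)\bigl[A(\tau,\rho(\tau,\fdot))-A(\tau,\rho'(\tau,\fdot))\bigr]U(\tau,s,\rho')\ffi\,d\tau.
}

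Next I would invoke the a priori bounds for the propagators furnished by the construction in \cite{Tanabe} and \cite{Guli}: under A1 there are constants $M,M'$ independent of $s,t,\rho,\rho'$ with $\|U(t,\tau,\rho)\|_{E\to E}\lt M$ and $\|U(\tau,s,\rho')\ffi\|_D\lt M'\|\ffi\|_D$ (the second being the statement that the propagator maps $D$ into $D$ with uniformly bounded norm). It then remains to prove the key Lipschitz estimate for the generator: for every $\psi\in D$,
\aa{
\bigl\|\bigl[A(\tau,\rho(\tau,\fdot))-A(\tau,\rho'(\tau,\fdot))\bigr]\psi\bigr\|_E\lt C_A\,\|\rho(\tau,\fdot)-\rho'(\tau,\fdot)\|_E\,\|\psi\|_D.
}
Granting this and applying it with $\psi=U(\tau,s,\rho')\ffi$, the integrand is bounded by $M\,C_A\,M'\sup_{s}\|\rho(s,\fdot)-\rho'(s,\fdot)\|_E\,\|\ffi\|_D$, the integral over $\tau\in[t,s]\sub[t,T]$ contributes a factor $\lt T$, and taking the supremum over $s$ gives the assertion with $K=M\,C_A\,M'$.

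The heart of the argument, and the step I expect to be the main obstacle, is the generator estimate, where the nonlocal term is delicate. The first- and second-order local terms are routine: by A1 the drift $f^k(\tau,x,\fdot)$ is Lipschitz in its third argument, and by Assertion \ref{cik1} of Lemma \ref{assers} together with \rf{akl} and A0 the coefficient $\beta^{kl}(\tau,x,\fdot)$ is Lipschitz as well, so that $|f^k(\rho)-f^k(\rho')|$ and $|\beta^{kl}(\rho)-\beta^{kl}(\rho')|$ are each bounded by $\mathrm{const}\cdot\|\rho-\rho'\|_E$, which against $\pl_k\psi$ and $\pl^2_{kl}\psi$ produces the desired bound times $\|\psi\|_D$. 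For the integral term I would write, with $\dl_\rho:=\dl(\tau,x,\rho(\tau,x),y)$, the second difference
\aa{
\bigl[\psi(x+\dl_\rho)-\psi(x+\dl_{\rho'})\bigr]-\pl_k\psi(x)\,(\dl^k_\rho-\dl^k_{\rho'})
=\int_0^1\bigl[\pl_k\psi(x+\dl_{\rho'}+r(\dl_\rho-\dl_{\rho'}))-\pl_k\psi(x)\bigr](\dl^k_\rho-\dl^k_{\rho'})\,dr,
}
whose modulus is at most $\|\psi\|_D\,(|\dl_{\rho'}|+|\dl_\rho-\dl_{\rho'}|)\,|\dl_\rho-\dl_{\rho'}|$. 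Integrating against $\nu$, applying Cauchy--Schwarz, and using \rf{dl-claim} (applied to $\sg(\tau,x,\rho)-\sg(\tau,x,\rho')$ in place of $\sg$) gives $\int_\Rnu|\dl_\rho-\dl_{\rho'}|^2\,\nu(dy)\lt\|\sg(\tau,x,\rho)-\sg(\tau,x,\rho')\|^2_{\Rnu^P\x\ell_2}\lt L_\sg^2\|\rho-\rho'\|^2_E$, while the boundedness of $\sg$ from A1 gives $\int_\Rnu|\dl_{\rho'}|^2\,\nu(dy)\lt C$ uniformly. The only potentially quadratic contribution, $\int_\Rnu|\dl_\rho-\dl_{\rho'}|^2\,\nu(dy)$, is reduced to a linear one by estimating one of its two factors by the uniform bound $(\int_\Rnu|\dl_\rho-\dl_{\rho'}|^2\,\nu(dy))^{1/2}\lt 2C^{1/2}$ and the other by $L_\sg\|\rho-\rho'\|_E$; collecting terms yields the nonlocal part of the generator estimate and completes the proof.
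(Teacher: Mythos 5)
Your proposal is correct and follows essentially the same route as the paper's proof: the Duhamel identity for $U(t,s,\rho)-U(t,s,\rho')$, the Lipschitz estimate for $A(\tau,\rho)-A(\tau,\rho')$ in $\mc L(D,E)$ using A1, Lemma \ref{assers} and \rf{dl-claim} (your explicit second-difference treatment of the nonlocal term is just a more detailed version of the paper's bound \rf{gen-diff}), and uniform bounds on the propagator norms. The only point you pass over more quickly is the uniformity in $t$, $s$ and $\rho$ of $\|U(t,s,\rho)\|_{\mc L(D)}$, which you attribute to the construction in the cited references, while the paper additionally argues it via the probabilistic representation \rf{for22} combined with Tanabe's result that $U(t,s,\rho)$ maps $D$ into $D$.
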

\begin{proof}
We have:
\mm{
\big(U(t,s,\rho') - U(t,s,\rho)\big)\ffi=
\left.U(t,r,\rho')U(r,s,\rho)\ffi\right|_{r=t}^s\\
= \int_t^s dr\, U(t,r,\rho')\big(A(r,\rho'(r,\fdot))-A(r,\rho(r,\fdot))\big)U(r,s,\rho)\ffi.
}
This implies:
\mmm{
\lb{02-08}
\sup_{s\in [t,T]} \|U(t,s,\rho')\ffi - U(t,s,\rho)\ffi\|_E 
\lt T  \sup_{s\in [t,T]}\|U(t,s,\rho')\|_{\mc L(E)}\\ 
\x \sup_{\substack{r,s\in [t,T],\\ r\lt s}} \|U(r,s,\rho)\|_{\mc L(D)} 
\sup_{s\in [t,T]}\|A(s,\rho(s,\fdot))-A(s,\rho'(s,\fdot))\|_{\mc L(D,E)}\,\|\ffi\|_D.
}
Taking into account that \aa{
\|\te(s,\fdot)\|_D = \sup_{x\in\Rnu^P}|\te(s,x)| +
\sup_{x\in\Rnu^P}|\nab \te(s,x)| + \sup_{x\in\Rnu^P}|\nab \nab
\te(s,x)|, } and applying  \rf{akl}, \rf{generator}, and Lemma
\ref{assers},  we obtain that there exists a constant $\bar K>0$ which
does not depend on $s$, $\rho$, and $\rho'$, so that
\aaa{
\lb{gen-diff} &\sup_{\|\te\|_D \lt 1}
\sup_{x\in\Rnu^P}\|A(s,\rho(s, x))\te - A(s,{\rho'}(s, x))\te\|_{\mc L(D,E)}
\notag\\
&\lt \bar K\sup_{x\in\Rnu^P}\Big[ |f(s,x,\rho(s,x))-f(s,x,\rho'(s,x))|
\notag\\
&+ \|\sg(s,x,\rho(s,x))-\sg(s,x,\rho'(s,x))\|_{\Rnu^P \x \ell_2}
\notag\\
&+ \sup_{x'\in\Rnu^P}|\nab\nab \te(t,x')| \Big(\int_\Rnu
|\dl(s,x,\rho(t,x),y) - \dl(s,x,\rho'(t,x),y)|^2
\nu(dy)\Big)^\frac12\Big].
}
By \rf{dl-claim}, the last summand in \rf{gen-diff} is 
smaller than $$\|\sg(s,x,\rho(s,x))-\sg(s,x,\rho'(s,x))\|_{\Rnu^P \x
\ell_2}$$ up to a multiplicative constant. 
Therefore, modifying the constant
$\bar K$, if necessary, by Assumption A1, we obtain that \aa{
\sup_{\|\te\|_D \lt 1} \|A(s,\rho(s, \fdot))\te - A(s,{\rho'}(s,
\fdot))\te\|_E \lt \bar K \|\rho(s,\fdot) - \rho'(s,\fdot)\|_E }where 
$\bar K$ does not depend on $s$, $\rho$, and $\rho'$.
Now by \rf{02-08}, there exists a constant $K>0$, so that
\aa{ \sup_{s\in [t,T]} \|U(t,s,\rho)\ffi -
U(t,s,\rho')\ffi\|_\e \lt K\,T \hspace{-1mm} \sup_{s\in
[t,T]}\|\rho(s,\fdot) - \rho'(s,\fdot)\|_\e\, \|\ffi\|_{D}.}
Let us show that $K$ does not depend on $t$, $s$, $\rho$, and $\rho'$.
By It\^o's formula, for $s\in [t,T]$ and for $\ffi\in D$,
\aaa{
\lb{for22}
[U(t,s,\rho)\ffi](x) =  E[\ffi(X_s)|X_t = x],
}
where $X_s$ is the solution to \aa{ 
dX^k_s = f^k(s,X_s,\rho(s,X_s)) +
\sum_{i=1}^\infty \sg^k_i(s,X_{s-},\rho(s,X_{s-}))dH^\i_s. }
Moreover, by the results of \cite{Tanabe} (p. 102),  
$U(t,s,\rho)$ maps $D$ into $D$, 
and \rf{for22} implies that $U(t,s,\rho)\in \mc L(D)$ so that
the norm $\|U(t,s,\rho)\|_{\mc L(D)}$ is bounded uniformly in $\rho$.
Next, since  for each $\ffi\in D$, $U(t,s,\rho)\ffi \in D$ is 
continuous in $t$ and $s$, then it is bounded uniformly in $t$ and $s$.
Therefore, $\|U(t,s,\rho)\|_{\mc L(D)}$ is bounded uniformly in $t$, $s$, and $\rho$.
This implies the statement of the lemma.
\end{proof}
\begin{Thm}
Let Assumptions A0 and A1 hold. 
Then, there exists a $T_0>0$ so that for all
$T\in (0,T_0]$, Problem
\rf{evolution} has a unique solution on $[0,T]$.
\end{Thm}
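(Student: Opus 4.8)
The plan is to obtain the solution of the quasilinear problem \rf{evolution} as a fixed point of the map $\Phi$ from \rf{map-phi}. Under A0 and A1 the linear problem \rf{evolution1} with a frozen $\rho$ is solved by the propagator $U(\fdot,\fdot,\rho)$ constructed above, through
\aa{
[\Phi(\rho)](t,x) = [U(t,T,\rho)h](x) + \int_t^T [U(t,s,\rho)g(s,\fdot,\rho(s,\fdot))](x)\, ds,
}
and a function $\te$ solves \rf{evolution} if and only if $\Phi(\te)=\te$. The whole statement therefore reduces to producing a unique fixed point of $\Phi$, which I would do by the Banach contraction principle on a suitable ball, for $T$ small.

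First I would fix the functional setting. Since Lemma \ref{fplem} controls the $\rho$-dependence of $U$ in the weak norm $\|\fdot\|_E$ but weighs the function it acts on in the strong norm $\|\fdot\|_D$, I would work inside $B_R=\{\rho:\ \sup_s\|\rho(s,\fdot)\|_D\lt R\}$, metrized by $d(\rho,\rho')=\sup_s\|\rho(s,\fdot)-\rho'(s,\fdot)\|_E$. For the invariance $\Phi(B_R)\sub B_R$ I would use that by the representation \rf{for22} the propagator is a conditional expectation, hence $\|U\|_{\mc L(E)}\lt 1$, and that $\|U(t,s,\rho)\|_{\mc L(D)}$ is bounded uniformly in $t,s,\rho$ (both established in the proof of Lemma \ref{fplem}); together with A1 and the chain rule this gives
\aa{
\sup_t\|[\Phi(\rho)](t,\fdot)\|_D \lt \|U\|_{\mc L(D)}\,\|h\|_D + T\,\|U\|_{\mc L(D)}\,\sup_s\|g(s,\fdot,\rho(s,\fdot))\|_D,
}
where $\sup_s\|g(s,\fdot,\rho(s,\fdot))\|_D$ is dominated by a constant depending only on $R$ and the A1-bounds. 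Fixing $R\gt 2\|U\|_{\mc L(D)}\|h\|_D$ and then shrinking $T$ makes the right-hand side at most $R$.

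For the contraction I would write $[\Phi(\rho)-\Phi(\rho')](t,\fdot)$ as the terminal difference $U(t,T,\rho)h-U(t,T,\rho')h$ plus the time integral of
\aa{
U(t,s,\rho)\big[g(s,\fdot,\rho(s,\fdot))-g(s,\fdot,\rho'(s,\fdot))\big] + \big[U(t,s,\rho)-U(t,s,\rho')\big]g(s,\fdot,\rho'(s,\fdot)).
}
Lemma \ref{fplem} bounds the terminal term and the second integrand term by $KT\|h\|_D\,d(\rho,\rho')$ and $KT\,C(R)\,d(\rho,\rho')$ respectively, while the first integrand term is bounded, via $\|U\|_{\mc L(E)}\lt 1$ and the Lipschitz continuity of $g$ in its last argument (A1), by $\|\pl_y g\|_\infty\,d(\rho,\rho')$. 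Integrating over $[t,T]$ gains an extra factor $T$, so
\aa{
d(\Phi(\rho),\Phi(\rho'))\lt C(T)\,d(\rho,\rho'),\qquad C(T)=KT\|h\|_D+T\|\pl_y g\|_\infty+KT^2C(R).
}
Since $C(T)\to0$ as $T\to0$, I may choose $T_0$ so small that $C(T)<1$ and the invariance both hold for every $T\in(0,T_0]$; the Banach principle then delivers a unique $\te\in B_R$ with $\Phi(\te)=\te$, i.e.\ the asserted unique solution.

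The delicate point—and the step I expect to cost the most work—is the mismatch between the weak metric $d$ governing the contraction and the strong norm $\|\fdot\|_D$ defining $B_R$: a $\|\fdot\|_D$-ball need not be $d$-complete, since a $\|\fdot\|_E$-limit of uniformly $\C^2_b$-bounded functions is a priori only of class $\C^{1,1}$. To close this gap I would run the Picard iteration $\rho_{n+1}=\Phi(\rho_n)$ from some $\rho_0\in B_R$; the iterates remain in $B_R$ and are $d$-Cauchy, so they converge in the complete space $\C_b([0,T]\x\Rnu^P\to\Rnu^Q)$ to a limit $\te$ whose spatial gradients converge locally uniformly (Arzel\`a--Ascoli, the Hessians being uniformly bounded) and whose Hessian obeys the same bound almost everywhere. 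A regularity bootstrap—substituting this $\te$ into the fixed-point identity and exploiting that the propagator $U$ smooths its data—then lifts $\te$ back into $\C^{1,2}$, whence $\Phi(\te)=\te$ and $\te$ is the unique classical solution of \rf{evolution}.
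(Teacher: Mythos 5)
Your overall plan coincides with the paper's: solve \rf{evolution} as a fixed point of the map $\Phi$ from \rf{map-phi}, obtain a contraction in the weak norm $\|\fdot\|_E$ for small $T$ using Lemma \ref{fplem}, the bound $\|U(t,s,\rho)\|_{\mc L(E)}\lt 1$ coming from \rf{for22}, the uniform bound on $\|U(t,s,\rho)\|_{\mc L(D)}$, and the Lipschitzness of $g$ in $y$; your invariance and contraction estimates on the ball $B_R$ are in order. The genuine gap is precisely the step you single out at the end: the passage from the $E$-limit of the Picard iterates, which is a priori only $\C^{1,1}$ in $x$, to a classical $\C^{1,2}$ solution. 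Your proposed repair rests on the assertion that ``the propagator $U$ smooths its data'', and in this setting that is false: by \rf{akl} the second-order coefficient $\beta^{kl}$ is a rank-one matrix, and it vanishes identically when $a=0$ (a L\'evy process with no Gaussian component), while the integro-differential term does not regularize either. The operator $U(t,s,\rho)$ is just the Feynman--Kac operator $\ffi\mto E[\ffi(X_s)\,|\,X_t=x]$ of a possibly degenerate jump-diffusion; it maps $D$ into $D$ but does not improve regularity. Hence the bootstrap ``substitute $\te$ into the fixed-point identity and gain a derivative'' does not go through: with $\te$ only $\C^{1,1}$, the datum $g(s,\fdot,\te(s,\fdot))$ is only $\C^{1,1}$, and applying $U$ cannot raise it to $\C^2_b$.

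The paper closes this gap by a different device, which you could adopt verbatim. Once the fixed point $\te$ of \rf{evolution3} is found in $E$, freeze it inside the propagator and in the terminal term and consider the auxiliary equation \rf{evolution4}, in which only the $y$-argument of $g$ carries the unknown $\bar\te$, as a fixed-point problem in the space $D$. Since $\|U(t,s,\te)\|_{\mc L(D)}$ is bounded uniformly in $t,s$ and $g$ is Lipschitz in $y$ with a uniform constant, this equation has a unique solution $\bar\te\in D$; it also has a unique solution in $E$, and $\te$ itself solves it in $E$, so $\bar\te=\te$ and therefore $\te\in D$, i.e. $\te$ has the $\C^{1,2}$ regularity needed to be a classical solution of \rf{evolution} --- no smoothing property of $U$ is ever invoked. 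Replacing your Arzel\`a--Ascoli/smoothing paragraph with this argument (or some other genuine proof of the regularity lift) is what is needed to make your proof complete.
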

\begin{proof}
Consider the equation:
\aaa{
\label{evolution3} \te(t,x) = [U(t,T,\te)h](x) + \int_t^T
[U(t,s,\te)\, g(s,\fdot,\te(s,\fdot))](x)\, ds. }
The proof of the existence and uniqueness of a solution to
\rf{evolution3} is equivalent to the existence of a unique fixed
point of map \rf{map-phi} in the space $E$. For a sufficiently small time interval
$[0,T]$, the latter is implied by Assumption A1 and Lemma \ref{fplem}.
Now let $\te$ be the solution to \rf{evolution3} on $[0,T]$. Consider the equation
\aaa{
\lb{evolution4} \bar\te(t,x) = [U(t,T,\te)h](x) + \int_t^T
[U(t,s,\te)\, g(s,\fdot,\bar\te(s,\fdot))](x)\, ds }
in the space $D$. Since $\|U(t,s,\te)\|_{\mc L(D)}$ is bounded, and $g(s,x,y)$
is Lipschitz in $y$ whose Lipschitz constant does not depend on $s$ and $x$,
the fixed point argument implies the existence of a unique solution $\bar \te \in D$
to \rf{evolution4}.
Clearly, $\bar \te$ is also a unique solution to \rf{evolution4} in $E$.
Hence $\bar \te = \te$, and therefore, $\te\in D$. This implies
that $\te$ is the unique solution to Problem \rf{evolution}.
\end{proof}
\subsection*{3.3 Existence and Uniqueness Theorem for the {FBSDEs}}
In Paragraph 3.2 we found some conditions under
which there exists a unique solution to PIDE \rf{PIDE}. However,
this solution may exist under more general assumptions. Thus, we
prove the existence and uniqueness of a solution to FBSDEs
\rf{fbsde1} assuming the existence and uniqueness of a solution to
PIDE \rf{PIDE}. Specifically, we will assume the following:
\begin{enumerate}
\item[\bf A2] Functions $f$, $g$, and $\sg$ possess bounded first
order derivatives in all spatial variables.
\item[\bf A3] Assumption A0 is fulfilled and Final value problem \rf{PIDE} has a unique solution $\te$
which belongs to the class $\C_b^{1,2}([0,T]\x \Rnu^P\to\Rnu^Q)$.
\item[\bf A4] There exists a constant  $K>0$ which does not depend on $(t,x,y,z)$,
such that $\sum_{i=1}^\infty \big|\frac{\pl}{\pl z_i}
f(t,x,y,\{z_i\}_{i=1}^\infty)\big| \big(\int_\Rnu
|p_i(y)|^2\nu(dy)\big)^\frac12 < K$.
\end{enumerate}
\begin{lem}
Assume A2, A3, and A4 hold. Then the function $f(t,\bar x, \bar y,
\fdot) \circ \te^{(1)}(t,x)$, where $\te^{(1)}(t,x)$ is given by
\rf{te1}, is Lipschitz in $x$ for all $(t,\bar x,\bar y)$, and the
Lipschitz constant does not depend on $(t,\bar x,\bar y)$.
\end{lem}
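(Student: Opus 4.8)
The plan is to freeze $(t,\bar x,\bar y)$ and estimate the increment of $x\mto f(t,\bar x,\bar y,\te^{(1)}(t,x))$ by a first-order (mean value) expansion in the $z$-slot of $f$, and then to control each coordinate increment of $\te^{(1)}$ separately, summing the resulting series with the help of Assumption A4. Writing $z=\{z_i\}_{i=1}^\infty$ and applying the fundamental theorem of calculus along the segment joining $\te^{(1)}(t,x_1)$ and $\te^{(1)}(t,x_2)$ in $\Rnu^Q\x\ell_2$, I obtain
\aa{
\big|f(t,\bar x,\bar y,\te^{(1)}(t,x_1))-f(t,\bar x,\bar y,\te^{(1)}(t,x_2))\big|
\lt \sum_{i=1}^\infty \sup\Big|\tfrac{\pl f}{\pl z_i}\Big|\;
\big|\te^{(1)}_i(t,x_1)-\te^{(1)}_i(t,x_2)\big|.
}
Everything then reduces to a bound of the form $\big|\te^{(1)}_i(t,x_1)-\te^{(1)}_i(t,x_2)\big|\lt C\,w_i\,|x_1-x_2|$ with $C$ independent of $i$ and of $(t,\bar x,\bar y)$, and with weights $w_i$ for which $\sum_i\sup|\pl f/\pl z_i|\,w_i<\infty$. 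Assumption A4 says precisely that the choice $w_i=\big(\int_\Rnu|p_i(y)|^2\nu(dy)\big)^{1/2}$ is summable against the derivatives of $f$, so this is the weight I aim to produce.

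Next I split $\te^{(1)}_i$ from \rf{te1} into its integral part $\int_\Rnu[\te(t,x+\dl)-\te(t,x)]p_i(y)\,\nu(dy)$ and its drift part $c^k_i(t,x,\te(t,x))\,\pl_k\te(t,x)$, and treat them in turn. For the integral part I apply the Cauchy--Schwarz inequality in $L_2(\nu)$ to pull out the factor $\big(\int_\Rnu|p_i|^2\nu(dy)\big)^{1/2}=w_i$, which leaves me to bound the $L_2(\nu)$-norm of the increment $y\mto G(t,x_1,y)-G(t,x_2,y)$, where $G(t,x,y)=\te(t,x+\dl(t,x,\te(t,x),y))-\te(t,x)$, by $C|x_1-x_2|$. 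Here I use that $\te$ and $\nab\te$ are bounded and Lipschitz (A3) and that $\sg$ is Lipschitz with bounded derivatives (A2); the decisive device is identity \rf{dl-claim}, applied to the increments of $\sg$, which converts the $L_2(\nu(dy'))$-norm of $\dl(t,x_1,\te(t,x_1),\fdot)-\dl(t,x_2,\te(t,x_2),\fdot)$ into the $\Rnu^P\x\ell_2$-norm $\|\sg(t,x_1,\te(t,x_1))-\sg(t,x_2,\te(t,x_2))\|$, which in turn is $\lt C|x_1-x_2|$.

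For the drift part I use Assertion \ref{cik1} of Lemma \ref{assers} to write $c^k_i=a^2 q_{i-1}(0)\sum_{j}\sg^k_j(t,x,\te)q_{j-1}(0)$; since $\{q_{j-1}(0)\}\in\ell_2$ by A0, the inner sum is Lipschitz in $x$, so the increment of this part is bounded by $C\,|q_{i-1}(0)|\,|x_1-x_2|$, that is, by the weight $|q_{i-1}(0)|$. Summing over $i$, the integral contribution is controlled directly by A4, while the drift contribution is handled by Cauchy--Schwarz, $\sum_i\sup|\pl f/\pl z_i|\,|q_{i-1}(0)|\lt\big(\sum_i\sup|\pl f/\pl z_i|^2\big)^{1/2}\big(\sum_i q_{i-1}(0)^2\big)^{1/2}$, which is finite by A0 together with the fact that A4 forces $\{\sup|\pl f/\pl z_i|\}\in\ell_2$ (indeed $w_i^2=1-a^2q_{i-1}(0)^2\to1$ by Lemma \ref{pi-pro}, so the $w_i$ are bounded below away from $0$ for large $i$).

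I expect the main obstacle to be keeping the constant $C$ uniform in $x$ while passing the $L_2(\nu)$-estimates through the nonlinear composition $\te(t,x+\dl)$: the increment of $G$ yields, besides the clean term $\nab\te\fdot(\dl_1-\dl_2)$, a term of the shape $\big(\nab\te(t,x_1+s\dl_1)-\nab\te(t,x_2+s\dl_2)\big)\fdot\dl_2$, whose $L_2(\nu)$-norm involves $\int_\Rnu|\dl(t,x,\te(t,x),y)|^2\nu(dy)$; controlling this uniformly in $x$ is exactly where the boundedness of $\sg(t,\fdot,\te(t,\fdot))$ (and of $\te$, $\nab\te$) must be invoked, again through \rf{dl-claim}. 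The second, more bookkeeping-type point is justifying the term-by-term fundamental theorem of calculus in the infinite-dimensional $z$-variable and the interchange of summation and integration; both are legitimized by the $\ell_2$-valuedness of $\te^{(1)}$ guaranteed by Lemma \ref{assers} and by the summability furnished by A4.
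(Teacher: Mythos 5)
Your proposal follows essentially the same route as the paper's proof: you split $\te^{(1)}$ from \eqref{te1} into its integral and drift parts, pull out the weight $\big(\int_\Rnu|p_i(y)|^2\nu(dy)\big)^{1/2}$ by Cauchy--Schwarz, convert the $L_2(\nu)$-increment of $\dl$ into $\|\sg(t,x_1,\te(t,x_1))-\sg(t,x_2,\te(t,x_2))\|_{\Rnu^P\x\ell_2}$ via \eqref{dl-claim}, treat $c^k_i$ through Lemma \ref{assers}, and sum using A4 --- which is exactly the chain of estimates \eqref{arg55} in the paper. Your additional bookkeeping (the mean-value step in the $z$-slot, the cross term controlled by $\int_\Rnu|\dl|^2\nu(dy)$, and the $\ell_2$-summation of the drift contribution via A0) only makes explicit steps the paper leaves implicit, so the argument is correct and matches the paper's approach.
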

\begin{proof}
Note that by Assertions \ref{cik1} and \ref{asr2} of Lemma
\ref{assers}, the function $c^k = \{c^k_i\}_{i=1}^\infty$ is
Lipschitz in two spatial variables as an $\ell_2$-valued function.
By A3, $\te$ and $\pl_k\te$ are Lipschitz. Therefore, the last
summand in \rf{te1} is Lipschitz in $x$, and moreover, its
Lipschitz constant does not depend on $t$ by boundedness of the both
multipliers. 
Let us prove that the map  
\aaa{ \lb{mp1} \Rnu^P \to \Rnu^Q,  \quad x
\mto \int_\Rnu \te(t,\bar x + \dl(t,x,\rho(t,x),y))\, p_i(y)\nu(dy)
} is Lipschitz, where $\bar x$  and $t$ are fixed. Let $x_1,x_2 \in \Rnu^P$, 
and let $\rho_1=\rho(t,x_1)$ and $\rho_2 =
\rho(t,x_2)$, where $t$ is fixed. We have:
\mmm{ \lb{arg55}
\Big|\int_\Rnu [\te(t,\bar x + \dl(t,x_1,\rho_1,y)- \te(t,\bar x + \dl(t,x_2,\rho_2,y)] p_i(y)\nu(dy)\Big|\\
\lt \max_{x\in\Rnu^P} |\nab \te(t,x)| \int_\Rnu
|\dl(t,x_1,\rho_1,y) - \dl(t,x_2,\rho_2,y)|\,|p_i(y)|\nu(dy)\\
\lt \max_{x\in\Rnu^P} |\nab \te(t,x)|\Big(\int_\Rnu
|\dl(t,x_1,\rho_1,y) - \dl(t,x_2,\rho_2,y)|^2 \nu(dy)
\int_\Rnu |p_i(y)|^2\nu(dy)\Big)^\frac12\\
\lt K \max_{x\in\Rnu^P} |\nab \te(t,x)|\Big(\int_\Rnu
|p_i(y)|^2\nu(dy)\Big)^\frac12 \|\sg(t,x_1,\rho_1) -
\sg(t,x_2,\rho_2)\|_{\Rnu^P\x \ell_2}. }
Now the Lipschitzness of map \rf{mp1} and the boundedness of the gradient of $\te$ imply
that the map \aa{ \Phi: \Rnu^P \to \Rnu^Q \x \ell_2, \;
x\mto\int_\Rnu \te(t,x + \dl(t,x,\theta(t,x),y)) - \te(t,x))
p_i(y)\, \nu(dy) } is also Lipschitz. Argument \rf{arg55} implies
that the Lipschitz constant of $\Phi$ has the form $\td
K\,\Big(\int_\Rnu |p_i(y)|^2\nu(dy)\Big)^\frac12$ where $\td K$ is a
constant that does not depend on $i$. Now A4 implies the statement
of the lemma.

\end{proof}
\begin{Prop}
Assume A2, A3, and A4. Then, the SDE
\begin{gather}
\label{sdetheta}
\left\{
\begin{array}{l}
dX_t = f(s,X_s,\theta(s,X_s),\theta^{(1)}
(s,X_{s-})) ds
 + \sum_{i=1}^\infty\sg_i(s,X_{s-},\te(s,X_{s-})) dH^\i_s,\\
X_0 = x,
\end{array}
\right.
\end{gather}
 where $\te$ is the solution to \rf{PIDE} and
$\te^{(1)}$ is defined by \rf{te1}, has a pathwise unique 
c\`adl\`ag adapted solution.
\end{Prop}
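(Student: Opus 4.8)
The plan is to recast \rf{sdetheta} as a standard jump-diffusion SDE driven by finitely many Brownian motions and a compensated Poisson random measure, to verify that all of its coefficients are Lipschitz and of linear growth in the state variable, and then to invoke the classical existence-and-uniqueness theory for such equations (Picard iteration together with the It\^o isometry, Doob's maximal inequality, and Gronwall's lemma, as in \cite{applebaum}). Write $\td b(s,x) = f(s,x,\te(s,x),\te^{(1)}(s,x))$ for the drift and $\td\sg_i(s,x) = \sg_i(s,x,\te(s,x))$ for the diffusion coefficients, so that, since the drift is integrated against $ds$, replacing $X_{s-}$ by $X_s$ in the $\te^{(1)}$-argument is immaterial.

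First I would establish that $\td b$ and $\td\sg$, viewed as functions of $x$ alone, are Lipschitz uniformly in $s$. For the drift I would split
\aa{
\td b(s,x_1) - \td b(s,x_2) &= f(s,x_1,\te(s,x_1),\te^{(1)}(s,x_1)) - f(s,x_2,\te(s,x_2),\te^{(1)}(s,x_1))\\
&\quad + f(s,x_2,\te(s,x_2),\te^{(1)}(s,x_1)) - f(s,x_2,\te(s,x_2),\te^{(1)}(s,x_2)).
}
The first difference is bounded by $C|x_1-x_2|$ using the boundedness of the first-order derivatives of $f$ (Assumption A2) together with the boundedness of $\nab\te$ (Assumption A3); the second difference is precisely $f(s,\bar x,\bar y,\fdot)\circ\te^{(1)}$ evaluated at $x_1$ and $x_2$ with $\bar x = x_2$ and $\bar y = \te(s,x_2)$, hence bounded by $C|x_1-x_2|$ with $C$ independent of $(s,\bar x,\bar y)$ by the preceding lemma. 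For the diffusion, A2 makes $\sg$ Lipschitz in $(x,y)$ as an $\Rnu^P\x\ell_2$-valued map and A3 makes $\te$ Lipschitz, whence $\|\sg(s,x_1,\te(s,x_1)) - \sg(s,x_2,\te(s,x_2))\|_{\Rnu^P\x\ell_2} \lt C|x_1-x_2|$.

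Next I would use Lemma \ref{hi-rep} to rewrite the martingale part. Substituting $H^\i_s = q_{i-1}(0)B^\la(s) + \int_\RR p_i(y)\,\td N(s,dy)$ and exchanging summation with integration yields
\aa{
\sum_{i=1}^\infty \td\sg_i(s,X_{s-})\, dH^\i_s = \Sigma(s,X_{s-})\, dB^\la(s) + \int_\RR \dl(s,X_{s-},\te(s,X_{s-}),y)\,\td N(ds,dy),
}
where $\Sigma(s,x) = \sum_{i=1}^\infty \td\sg_i(s,x)\,q_{i-1}(0)$ and $\dl$ is as in \rf{dl}. Thus \rf{sdetheta} is equivalent to a finite-dimensional jump-diffusion SDE driven by the Brownian motions $\{B_i\}$ and the compensated measure $\td N$. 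The coefficient $\Sigma$ converges and is Lipschitz by A0 and the Lipschitz property of $\td\sg$, while the jump coefficient satisfies the $L_2(\nu)$-Lipschitz estimate
\aa{
\int_\Rnu |\dl(s,x_1,\te(s,x_1),y) - \dl(s,x_2,\te(s,x_2),y)|^2\,\nu(dy) \lt C\,|x_1-x_2|^2,
}
which follows from identity \rf{dl-claim} applied to the difference $\sg(s,x_1,\te(s,x_1)) - \sg(s,x_2,\te(s,x_2))$ (using that $\dl$ is linear in its defining $\sg$) together with the diffusion estimate above. Linear growth of all three coefficients follows from these Lipschitz bounds and the boundedness of $\te$ and $\nab\te$. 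With these estimates in hand, the pathwise unique c\`adl\`ag adapted solution is produced by the standard theorem for SDEs with Lipschitz, linearly growing coefficients driven by Brownian motions and a compensated Poisson random measure \cite{applebaum}.

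The main obstacle is the infinite-dimensionality of the driving noise and of the $z$-dependence of $f$: the series defining the diffusion and the argument $\te^{(1)}$ both obstruct a direct appeal to finite-dimensional SDE theory. This is resolved on two fronts — the recasting via Lemma \ref{hi-rep}, which collapses the infinite martingale sum into a single finite-dimensional Brownian integral plus a Poisson integral whose coefficient $\dl$ is $L_2(\nu)$-Lipschitz by \rf{dl-claim}, and the preceding lemma, which supplies the Lipschitz continuity in $x$ of the drift despite the infinite sum hidden inside $\te^{(1)}$.
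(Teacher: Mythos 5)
Your proposal is correct in substance but follows a genuinely different route from the paper. The paper proves the proposition directly: it shows the map $\Psi(X)_t = x + \int_0^t f(s,X_s,\te(s,X_s),\te^{(1)}(s,X_{s-}))ds + \int_0^t\sg(s,X_{s-},\te(s,X_{s-}))dH_s$ is a contraction on the Banach space $S$ with norm $\|\Phi\|_S^2 = E\sup_{t\in[0,T]}|\Phi_t|^2$, estimating the martingale term via the Burkholder--Davis--Gundy inequality together with $\<H^\i,H^\j\>_t = \dl_{ij}t$ (so the stochastic integral against the orthogonalized Teugels martingales is handled as is, with no decomposition of the noise), then iterates to get the factor $K^nt^n/n!$, applies the Banach fixed point theorem, and finally selects c\`adl\`ag modifications of the Picard iterates. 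You instead use Lemma \ref{hi-rep} to collapse the series $\sum_i\sg_i\,dH^\i$ into $\Sigma\,dB^\la + \int\dl\,\td N$ and then cite the classical Lipschitz theory for jump-diffusions. Both arguments rest on the same analytic inputs (the preceding lemma for the Lipschitzness of the drift in $x$, and A2--A3 plus \rf{dl-claim} for the diffusion/jump coefficients), and your two-step splitting of the drift difference is a useful explicit justification of what the paper only asserts. What the paper's route buys is self-containedness: it never has to justify interchanging the infinite sum with the stochastic integrals, nor to check that an off-the-shelf theorem covers time-dependent coefficients and a compensated jump integral over all of $\Rnu$ with merely $L_2(\nu)$-Lipschitz jump coefficient. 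What your route buys is transparency of the noise structure and reuse of standard theory; to make it fully rigorous you should add the $L^2$-justification of the summation/integration interchange (A0 plus \rf{dl-claim} suffice) and verify that the version of the classical theorem you invoke indeed allows the whole jump part to be treated as a compensated integral (which it does here, since $\nu$ integrates $|p_i(y)|^2$ globally), as well as the uniform-in-$s$ bound at a fixed point $x_0$ needed for linear growth --- a point the paper itself also leaves implicit.
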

\begin{proof}
We will show that 
$$ \Psi(X)_t= x + \int_0^t
f(s,X_s,\theta(s,X_s),\theta^{(1)}(s,X_{s-})) ds
 + \int_0^t \sg(s,X_{s-},\theta(s,X_{s-}))dH_s
$$ 
is \ \ a \ \ contraction \ \ map \ \ in \ \ the \ \ Banach \ \ space
\ \ $S$ \ \ with \ \ the \ \ norm \ \ $\|\Phi\|^2_S =
E\sup_{t\in[0,T]} |\Phi_t|^2$. Take two points $X_s$ and $X'_s$ from
$S$. For simplicity of notation, let $\sg_s =
\sg(s,X_{s-},\te(s,X_{s-}))$ and $\sg'_s =
\sg(s,X'_{s-},\te(s,X'_{s-}))$. To estimate the difference of the
stochastic integrals with the integrands $\sg_s$ and $\sg'_s$ with
respect to the $\|\fdot\|_S$-norm, we apply the
Burkholder--Davis--Gundy inequality to the martingale $\int_0^t
(\sg_s - \sg'_s) dH_s$. We obtain that there exists a constant $C>0$
such that \mm{
E\sup_{r\in[0,t]}\Big|\int_0^r (\sg_s - \sg'_s) dH_s\Big|^2
\lt C E \Big[\int_0^\bullet (\sg_s - \sg'_s)dH_s\Big]_t\\
=C E\Big(\Big\< \int_0^\bullet (\sg_s - \sg'_s)dH_s\Big\>_t +
U_t\Big)
= C E \sum_{i,j=1}^\infty \int_0^t (\sg_i-\sg'_i,\sg_j-\sg'_j)d\<H_i,H_j\>_s\\
= C E \int_0^t \|\sg_s-\sg'_s\|^2_{\Rnu^P\x \ell_2} ds } where
$[\fdot]_t$ and $\<\fdot\>_t$ are the quadratic variation and the
predictable quadratic variation, respectively. Moreover, we applied
the identity $\<H_i,H_j\>_s = \dl_{ij} s$ and the decomposition
$[M]_t = \<M\>_t + U_t$ for the quadratic variation of a square
integrable martingale
(i.e. a martingale $M_t$ with $\sup_t |M_t|^2 <\infty$)
into the sum of the predictable quadratic variation and a uniformly
integrable martingale $U_t$ starting at zero. Next, we note that the
functions $x\mto f(s,x,\te(t,x),\te^1(t,x))$ and $x\mto
\sg(t,x,\te(t,x))$ are Lipschitz whose Lipschitz constants do not
depend on $t$. This and the above stochastic integral estimate imply
that there exist a constant $K>0$ such that 
$$E \sup_{s \in [0,t]}  |\Psi(X)_s -\Psi(X')_s|^2  \lt K  E \int_0^t
|X_s-X'_s|^2 ds \lt K  E \int_0^t \sup_{r\in[0,s]}|X_r-X'_r|^2 ds.
$$ 
Iterating this $n-1$ times we obtain: \aa{ E \sup_{s \in [0,t]}
|\Psi^n(X)_s -\Psi^n(X)'_s| \lt \frac{K^n t^n}{n!}
E \sup_{ s \in [0,t]} |X_s-X'_s|^2. }
Choosing $n$ sufficienty large so that $\frac{K^nT^n}{n!} < 1,$ we
obtain that $\Psi^n$ is a contraction, and thus, $\Psi$ is a
contraction as well. By the Banach fixed point theorem, the map
$\Psi$ has a unique fixed point in the space $S$. Clearly, this
fixed point is a unique solution to \rf{sdetheta}. Setting $X^{(0)}
= x$, and then, sucessively, $X^{(n)} = \Psi(X^{(n-1)})$, we can
choose c\`adl\`ag modifications for each  $X^{(n)}$. Since the
$X^{(n)}$'s converge to the solution $X$ in the norm of $S$, $X$
will be also c\`adl\`ag a.s.. This c\`adl\`ag solution is unique in the
space $S$, and therefore, pathwise unique.
\end{proof}
Introduce the space $\mc S$ of $\mc F_t$-predictable $\Rnu^Q\x
\ell_2$-valued stochastic processes with the norm $\|\Phi\|_{\mc
S}^2 = E\int_0^T \|\Phi_s\|^2_{\Rnu^Q\x \ell_2} ds$. Now we
formulate our main result.
\begin{Thm}
\lb{main-result} Suppose A2, A3, and A4 hold.  Let $X_t$ be the
c\`adl\`ag adapted solution to \rf{sdetheta}. Then, the triple
$(X_t,Y_t,Z_t)$, where $Y_t=\te(t,X_t)$, $Z_t= \te^{(1)}(t,X_{t-})$
with $\theta^{(1)}$ given by \rf{te1}, is a solution to FBSDEs
\rf{fbsde1}. Moreover, the pair of c\`adl\`ag solution processes
$(X_t,Y_t)$ is pathwise unique. The solution process $Z_t$ is unique
in the space $\mc S$.
\end{Thm}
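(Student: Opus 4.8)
The plan is to verify directly that the triple $(X_t,\te(t,X_t),\te^{(1)}(t,X_{t-}))$ solves \rf{fbsde1}, and then to obtain uniqueness by showing that \emph{any} solution must decouple as $Y_t=\te(t,X_t)$, $Z_t=\te^{(1)}(t,X_{t-})$, which reduces everything to the pathwise uniqueness of the forward equation \rf{sdetheta}. For existence, the forward equation in \rf{fbsde1} holds by construction: with $Y_s=\te(s,X_s)$ and $Y_{s-}=\te(s,X_{s-})$ inserted, \rf{sdetheta} \emph{is} the forward equation. For the backward equation I would apply It\^o's formula to $\te(t,X_t)$. Using Lemma \ref{hi-rep} to split $dH^\i_s$ into its continuous part $q_{i-1}(0)\,dB^\la$ and its jump part $\int_\RR p_i(x)\td N(ds,dx)$, the continuous martingale part of $X^k$ has coefficient $\sum_i\sg^k_i q_{i-1}(0)$ against $dB^\la$ and quadratic-variation rate $2\beta^{kl}$ by \rf{akl}, while the jumps of $X$ equal $\dl(s,X_{s-},\te(s,X_{s-}),\Delta L_s)$ by \rf{dl}. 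Hence the drift produced by It\^o's formula is exactly the combination of differential terms in \rf{PIDE} evaluated along $X$, which by the PIDE equals $-g(s,X_s,\te(s,X_s),\te^{(1)}(s,X_s))=-g(s,X_s,Y_s,Z_s)$.

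It then remains to identify the martingale part $M_t$ of $\te(t,X_t)$ with $\int_0^t \te^{(1)}(s,X_{s-})\,dH_s$. Both are square-integrable martingales, so by the predictable representation property it suffices to check that $\<M,H^\j\>_t$ agrees for the two. Using $\<H^\i,H^\j\>_t=\dl_{ij}t$, the candidate gives $\<\int\te^{(1)}dH,H^\j\>_t=\int_0^t \te^{(1)}_j(s,X_{s-})\,ds$. For $M$, the continuous--continuous covariation with $H^\j$ produces $\int_0^t c^k_j(s,X_{s-},\te)\,\pl_k\te\,ds$, after Assertion \ref{cik1} of Lemma \ref{assers} rewrites $a^2 q_{j-1}(0)\sum_i\sg^k_i q_{i-1}(0)$ as $c^k_j$; the jump--jump covariation produces $\int_0^t\!\int_\RR[\te(s,X_{s-}+\dl)-\te(s,X_{s-})]p_j(y)\,\nu(dy)\,ds$; the cross terms vanish since $B^\la$ and $\td N$ are independent. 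Comparing with \rf{te1}, the sum is precisely $\int_0^t\te^{(1)}_j(s,X_{s-})\,ds$, so $M=\int\te^{(1)}dH$. Feeding this into It\^o's formula and using $\te(T,\cdot)=h$ yields the backward equation. The required square-integrability and the c\`adl\`ag/predictable regularity follow from boundedness of $\te$ and the fact that $\te^{(1)}$ is $\Rnu^Q\x\ell_2$-valued (from Assertion \ref{asr2}, \rf{dl-claim}, and boundedness of $\nab\te$).

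For uniqueness, let $(X,Y,Z)$ be an arbitrary solution and put $\Delta_t=Y_t-\te(t,X_t)$. Applying It\^o's formula to $\te(t,X_t)$ along this (now coupled) $X$ and subtracting the backward equation, $\Delta$ solves a BSDE with terminal value $\Delta_T=h(X_T)-\te(T,X_T)=0$, driver bounded by $C(|\Delta_t|+\|\zeta_t\|)$, and martingale integrand $\zeta_t=Z_t-\td\te^{(1)}_t$, where $\td\te^{(1)}$ is the analogue of $\te^{(1)}(t,X_{t-})$ with the coefficients $\sg,\dl,c^k$ evaluated at $Y_{t-}$ rather than at $\te(t,X_{t-})$; by the Lipschitz bounds behind \rf{arg55} and Assertions \ref{cik1}--\ref{asr2} one has $\|\td\te^{(1)}_t-\te^{(1)}(t,X_{t-})\|\lt C|\Delta_{t-}|$. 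Applying It\^o to $|\Delta_t|^2$, taking expectations, using $\<H^\i,H^\j\>=\dl_{ij}t$ to evaluate the quadratic variation of $\int\zeta\,dH$, and absorbing the $\|\zeta_t\|^2$ term by Young's inequality gives $E|\Delta_t|^2\lt C\int_t^T E|\Delta_s|^2\,ds$, whence $\Delta\equiv0$ and $\zeta\equiv0$ by Gronwall. Thus every solution satisfies $Y_t=\te(t,X_t)$ and $Z_t=\te^{(1)}(t,X_{t-})$, so its $X$ solves \rf{sdetheta}; the pathwise uniqueness established in the Proposition then forces $X$, and hence $Y=\te(\fdot,X)$ and $Z=\te^{(1)}(\fdot,X_-)$, to agree for any two solutions.

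The main obstacle I anticipate is this uniqueness step. Because $f$ and $\sg$ genuinely depend on $(Y,Z)$, It\^o's formula along an arbitrary solution does \emph{not} reproduce the PIDE drift, and one must simultaneously control the operator mismatch $[\mc L^{Y,Z}-\mc L^{\te,\te^{(1)}}]\te$, the difference $g(\fdot,Y,Z)-g(\fdot,\te,\te^{(1)})$, and the martingale term $\zeta$; the Gronwall estimate closes only because each mismatch is Lipschitz in $(Y-\te,\,Z-\te^{(1)})$ and the $\zeta$-contribution can be absorbed. The secondary technical point is the martingale-part identification in the existence step, where Assertion \ref{cik1} is exactly what matches the continuous covariation to the $c^k_i\pl_k\te$ term of $\te^{(1)}$.
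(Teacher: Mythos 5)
Your proposal is correct and follows the paper's overall strategy: existence by applying It\^o's formula to $\te(t,X_t)$ and invoking PIDE \rf{PIDE}, uniqueness by comparing an arbitrary solution's $Y$ with $\te(\cdot,X)$ and closing with a Gronwall estimate. Two points of execution differ. For the stochastic term, the paper compensates the jump sum through Lemma \ref{lemma5} (whose own proof is a predictable-representation/bracket computation) and then reads off the integrand, whereas you identify the martingale part $M$ of $\te(t,X_t)$ directly by computing $\<M,H^\j\>_t$ against the decomposition of Lemma \ref{hi-rep}; these are the same computation performed in a different order, and your matching of the continuous covariation with the $c^k_j\,\pl_k\te$ term via Assertion \ref{cik1} of Lemma \ref{assers} is exactly how \rf{te1} is recovered. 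For uniqueness, the paper asserts that $(X,\td Y,\td Z)$ with $\td Y=\te(\cdot,X)$, $\td Z=\te^{(1)}(\cdot,X_{-})$ ``verifies the BSDE by the above argument'' and then runs the It\^o product/Gronwall estimate on $|\td Y-Y|^2$; as you correctly observe, along an arbitrary solution the forward coefficients are evaluated at $(Y_{s-},Z_s)$ rather than at $(\te,\te^{(1)})$, so the It\^o expansion produces operator and driver mismatch terms that must be absorbed into the Gronwall estimate via the Lipschitz bounds --- your handling of this, together with the explicit final reduction to the pathwise uniqueness of \rf{sdetheta} established in the Proposition, is actually more complete than what the paper writes. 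I see no gap; just make sure the Lipschitz-in-$z$ control of $f$ used in the mismatch estimate is argued from A2/A4, since A4 only bounds a weighted sum of the $z$-derivatives.
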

\begin{proof}
It suffices to prove that the triple $(X_t,Y_t,Z_t)$ defined in the
statement of the theorem verifies the BSDE in \rf{fbsde1}.
Application of It\^o's formula to $\te(t,X_t)$ gives: 
\mmm{
\label{itotheta}
\theta(T,X_t) -\theta(t,X_t) = \int_t^T \partial_s \theta(s,X_{s-})ds +\int_t^T \partial_{k}\theta(s,X_{s-})dX_s^k \\
+ \frac12 \int_t^T \partial^2_{kl}
\theta(s,X_{s-})d[(X^c)^k,(X^c)^l]_s\\ + \sum_{t < s \lt T}[
\theta(s,X_s)-\theta(s,X_{s-})- \Delta X^k_s \partial_{k}
\theta(s,X_{s-})], } where $X^c_s$ is the continuous part of $X_s$.
Using the representation for $H^\i_s$ from Lemma \ref{hi-rep}
we obtain that \aa{ d[(X^c)^k,(X^c)^l]_s= 2
\beta^{kl}(s,X_s,\theta(s,X_s))ds,
} where $\beta^{kl}$ is given by \rf{akl}. The forward SDE in
\rf{fbsde1}, the relation $\Delta H^\i_s = p_i(\Delta L_s)$,
obtained in \cite{nualart}, and representation \rf{dl} for the
function $\dl$ imply: \aaa{ \lb{Dlx} \Delta X_s =
\sum_{i=1}^{\infty} \sigma_i(s,X_{s-},Y_{s-}) \Delta H_s^\i =
\dl(s,X_{s-},Y_{s-}, \Dl L_s). }
Next, one can rewrite the last term in (\ref{itotheta}) as \mm{
\sum_{t < s \leq T}\Big[ \theta(s,X_{s-}+ \dl(s,X_{s-},Y_{s-}, \Dl
L_s))-\theta(s,X_{s-}) \\ - \dl^k(s,X_{s-},Y_{s-}, \Dl
L_s)\,\partial_{k} \theta(s,X_{s-})\Big], } where $\dl^k$ is the
$k$th component of $\dl$. Define the random function \aaa{ \lb{fh}
h(s,y)&= \theta(s,X_{s-} + \delta(s,X_{s-},\theta(s,X_{s-}),y)) - \theta(s,X_{s-})\notag\\
& -\dl^k(s,X_{s-},\theta(s,X_{s-}), y)\,\pl_{k} \te(s,X_{s-}). }
Note that for each fixed $s\in [0,T]$ and $\om\in \Om$, the function
$h$ satisfies condition \rf{condition5}. Indeed, the mean value
theorem, e.g. in the integral form, can be applied to the difference
of the first two terms in \rf{fh}. By boundedness of the partial
derivatives $\pl_k\te$, it suffices to verify that \aa{ E\int_0^T
|\dl(s,X_{s-},\theta(s,X_{s-}), y)|^2\, ds \, \nu(dy) < \infty. }
The latter holds by Assumption A0 and formula \rf{dl-claim}. Now
Lemma~\ref{lemma5} implies: \aa{
&\sum_{t < s \leq T} h(s,\Delta L_s)\\
&=\sum_{i=1}^{\infty}\int_t^T\int_\RR \Big[ \theta(s,X_{s-} + \delta(s,X_{s-},\theta(s,X_{s-}),y)) - \theta(s,X_{s-})\\
& -\dl^k(s,X_{s-},\te(s,X_{s-}),y) \,\pl_{k} \te (s,X_{s-})\Big]p_i(y) \nu(dy)dH_s^{\i}\\
&+ \int_t^T\int_\RR \Big[\theta(s,X_{s-} + \delta(s,X_{s-},\theta(s,X_{s-}),y)) - \theta(s,X_{s-})\\
&  -\dl^k(s,X_{s-},\te(s,X_{s-}),y) \,\pl_{k} \te
(s,X_{s-})\Big]\nu(dy)\, ds. }
Substituting this into \eqref{itotheta}, replacing $dX_s^k$ with the
right-hand side of \rf{sdetheta}, and taking into acccount that
$Y_t= \theta(t,X_t)$ and that $\theta(T,X_T)= h(X_T)$ by \rf{PIDE},
we obtain: \aa{ Y_t&= h(X_T) - \int_t^T \bigg[ \partial_s
\theta(s,X_{s-})+
\partial_{k} \theta(s,X_{s-})f^k(s,X_{s-},\theta(s, X_{s-}),\te^{(1)}(s,X_{s-}))\\
&+ \frac12 \partial_{kl} \theta(s,X_{s-})
\beta^{kl}(s,X_{s-},\theta(s,X_{s-}))\\
&+ \int_\RR  \big[\theta(s,X_{s-} + \delta(s,X_{s-},\theta(s,X_{s-}),y))\\
&- \theta(s,X_{s-}) -\dl^k(s,X_{s-},\te(s,X_{s-}),y) \,\pl_{k} \te (s,X_{s-})\big]\nu(dy)\bigg] ds\\
& - \int_t^T \sum_{i=1}^{\infty} \bigg[ \int_\RR
\big[\theta(s,X_{s-} + \delta(s,X_{s-},\theta(s,X_{s-}),y)) -\theta(s,X_{s-}) \\
&- \partial_{k}  \theta(s,X_{s-}) c_i^k (X_{s-},y)\big]p_i(y)
\nu(dy) \bigg]\: dH_s^\i. }
Clearly, in the first three summands under the $ds$-integral sign
one can equivalently write $X_s$ or $X_{s-}$. This is true since
$X_s$ has c\`adl\`ag paths, and therefore, $X_s$ and $X_{s-}$ can
differ only at a countable number of points. Now taking into account
PIDE (\ref{PIDE}), we note that the integrand in the drift term is
$-g(s,X_{s-},\theta(s,X_{s-}), \theta^{(1)}(s,X_{s-}))$ which is
$-g(s,X_{s-},Y_{s-},Z_s)$ by the definitions of $Y_s$ and $Z_s$, or,
it can be replaced by $-g(s,X_{s},Y_{s},Z_s)$ since $X_s$ and $Y_s$
have c\`adl\`ag paths. Finally, by \rf{te1} and the definition of
$Z_s$, the integrand in the stochastic term is $Z_s$.
Consequently, \aa{ Y_t = h(X_T) + \int_t^T g(s,X_s,Y_s,Z_s) -
\int_t^T Z_s \: dH_s, } which implies that $(X_s,Y_s,Z_s)$ is a
solution.

Let us prove the uniqueness. Let $(X_s,Y_s,Z_s)$ be an arbitrary
solution to \rf{fbsde1}. Let $\til{Y}_s= \theta(s,X_s),$ and
$\til{Z}_s = \theta^{(1)}(s,X_{s-})$, where $\theta$ is the solution
to \rf{PIDE}, and $\theta^{(1)}$ is defined by \rf{te1}. By the
above argument, $(X_s, \til{Y}_s, \til{Z}_s)$ verifies the BSDE in
\rf{fbsde1}. Applying It\^o's product formula to
$|\tilde{Y}_t-Y_t|^2$ and taking into consideration that $\td Y_T =
Y_T$, we obtain: \aa{ |\til{Y}_t - Y_t|^2  = -2 \int_t^T \Big(
\til{Y}_{s-}-Y_{s-}, d(\tilde{Y}_s-Y_s)\Big) +[\td Y - Y]_t - [\td Y
- Y]_T. }
Taking the expectations in the above relation gives: \aa{
&E|\tilde{Y}_s-Y_s|^2 +  E \int_t^T \|\til{Z}_s - Z_s\|_{\Rnu^Q \x \ell_2}^2 ds \\
&=2E \int_t^T \big(\til{Y}_s-Y_s,
g(s,X_s,\til{Y}_s,\til{Z}_s)-g(s,X_s,Y_s,Z_s)\big) \, ds. } By A2,
there exists a constant $C>0$ such that \mm{
E |\til{Y}_t- Y_t|^2 + E \int_t^T \|\til{Z}_s-Z_s\|_{\Rnu^Q\x\ell_2}^2\, ds \\
\lt C E \int_t^T |\til{Y}_s- Y_s|\big( |\til{Y}_s- Y_s| +
\|\til{Z}_s-Z_s\|_{\Rnu^Q\x\ell_2}\big) \, ds. }
%
Now using the standard estimates and applying Gronwall's inequality,
we obtain that $E |\til{Y}_t- Y_t|^2 + c E \int_t^T
\|\til{Z}_s-Z_s\|_{\Rnu^Q\x \ell_2}^2 ds = 0$ for some constant
$c>0$. The latter relation holds for all $t\in [0,T]$. This proves
that $\td Y_t$ is a modification of $Y_t$ and that $\|\td Z-Z\|_{\mc
S} = 0$. This implies the uniqueness result. 
\end{proof}

\,%
%
\section*{4 Option Pricing with a Large Investor in \\ L\'evy-type Markets}
\setcounter{section}{4}

\setcounter{equation}{0}

\setcounter{Thm}{0}

 Usually, when modeling financial assets it is assumed that all investors are price
takers whose individual buy and sell decisions do not influence the
price of assets. Cvitanic and Ma~\cite{cvitanic} have already
developed a model for hedging options in the presence of a large
investor in a Brownian market. However, observation of real data
suggests that patterns, like skewness,
 kurtosis, or the occurence of jumps are sufficiently significant
(see, e.g., Eberlein and Keller \cite{eberlein}) to deserve to be
accounted in a realistic model of option pricing. Furthermore, the
graphs of the evolution of stock prices at different time-scales are
sufficiently different from the self-similarity of a Brownian
motion. Thus, we develop a L\'evy-FBSDE option pricing model. We
believe that such a model conveys a much more realistic approach to
option pricing  in the presence of the already mentioned empirical
market characteristics. We assume the existence of a Large investor,
whose wealth and strategy may induce distortions of the price
process.

Let ${\mathcal M}$ be a  L\'evy-type Market, i.e. a market whose
stock price dynamics $S_t$ obeys the equation $S_t=S_0 e^{X_t}$,
where $X_t$ is a L\'evy-type stochastic integral \cite{applebaum}.
The market consists of $d$ risky assets and a money market account.
For the price process $P_0(t)$ of the money market account, we
assume that its evolution is given by the following equation \aa{
&dP_0(t)=P_0(t)\, r(t,W(t), Z(t)) \,dt, \:\: 0 \lt t \lt T ,\\
&P_0(0)= 1, } where $W(t)$ is the \emph{wealth process}, and $Z(t)$
is a portfolio-related process in a way that will be explained
later. For the risky assets, we add the stochastic component
represented by the volatility matrix $\sigma$ taking values in
$\RR^d \times \ell_2$.
We postulate that the evolution of the $d$-dimensional risky asset
price process $P(t)=\{P_i(t)\}_{i=1}^d$  is given by the following
SDE: \aaa{ dP_i(t)&= f_i(t,P(t),W(t), Z(t)) \:dt
+  \sum_{j=1}^{\infty}  \sigma_{j}^i(t,P(t), W(t)) \: dH^\j_t, \nonumber \\
P_i(0)&= p_i,\; \; p_i\gt 0,\; \; 1 \lt i \lt d, \quad t\in [0,T].
\label{sdepi} }
We derive the BSDE for the wealth process as in \cite{cvitanic}. For
the convinience of the reader we repeat this derivation: \aa{dW(t)=
\sum_{i=1}^{d} \al_i(t)\, dP_i(t) + \frac{W(t)- \sum_{i=1}^{d}
\al_i(t)P_i(t)}{P_0(t)} \,dP_0(t), } where $\al_i(t)$ is the
\emph{portfolio process}.
Substituting $dP_i(t)$ with the right-hand sides of (\ref{sdepi}),
we obtain:
\mmm{
\lb{wp}
dW(t) =\sum_{i=1}^{d} \al_i(t) \big[
f_i(t,P(t),W(t), Z(t)) dt
+  \sum_{j=1}^{\infty}  \sigma^i_j(t,P(t), W(t)) \, dH^\j(t)\big]\\
+ (W(t)- \sum_{i=1}^{d} \al_i P_i(t))\,r(t,W(t),Z(t)) \,dt\\
=g(t,P(t),W(t),Z(t), \al(t))dt + \sum_{i=1}^{\infty} Z_i(t)
dH^\i(t), 
}
where \aaa{ &g(t,\pi,w,z,a)
=\sum_{i=1}^{d} a_i f_i(t,\pi,w, z)
+ (w- \sum_{i=1}^{d}a_i \pi_i)\,r(t,w, z),\notag\\
&a = \{a_i\}_{i=1}^d,  \; \pi = \{\pi_i\}_{i=1}^d;\notag\\
&Z_i(t)=\sum_{j=1}^{d} \al_j(t)  \sigma_i^j(t,P(t), W(t)), \quad
i=1,2, \ldots. \lb{z-al} } As we are assuming the absence of risk
for the money market account, the evolution of its price depends
totaly on the interest rate the investor is earning. On the other
hand, to describe the evolution of the risky assets we use an SDE
with the stochastic term given as a sum of stochastic integrals with
respect to $H^\j$'s. This adds explanative power to the model, as it
affords the isolation of the individual contributions of each
$H^\j$.
Now, to guarantee that the stock price has positive components we
will rewrite \rf{sdepi} for $Q_i(t) = \log P_i(t)$ using It\^o's
formula.
For simplicity of notation, we will use the same symbols $f$, $g$,
$\sg$, and $h$ for the coefficients of the FBSDEs which we obtain
after rewriting SDE \rf{sdepi} with respect to
$Q(t)=\{Q_i(t)\}_{i=1}^d$ and substituting $P_i(t) =
\exp\{Q_i(t)\}$: \aaa{ \lb{sdeq} Q(t)= q + \int_0^t f(s,Q(s),W(s),
Z(s)) \,ds + \int_0^t \sigma(s,Q(s), W(s)) \, dH(s), } where $q =
\{\log p_i\}_{i=1}^{d}$. Due to relation \rf{z-al}, we exclude the
dependence on $\al(t)$ in \rf{wp}. BSDE \rf{wp} takes the form:
\aaa{ \lb{bsdew} W(t)= h(Q(T)) + \int_t^T g(s,Q(s),W(s),Z(s)) \, ds
- \int_t^T  Z(s)\, dH(s). }
Theorem \ref{main-result} and relation \rf{z-al} imply the following
result.
\begin{Thm}
Assume A2, A3 and A4. Then, FBSDEs (\ref{sdeq}-\ref{bsdew})
has a unique solution $(Q(t),W(t),Z(t))$ such that the pair
$(Q(t),W(t))$ is c\`adl\`ag. Furthermore, if for some
$\Rnu^d$-valued stochastic process $\{\al_j(t)\}_{j=1}^d$,
$\al_j(t)\gt 0$, relation  \rf{z-al} holds in the space $\mc S$,
then $\{\al_j(t)\}_{j=1}^d$ is a replicating portfolio.
 \end{Thm}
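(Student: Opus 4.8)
The first assertion is a direct application of Theorem \ref{main-result}. The system (\ref{sdeq}-\ref{bsdew}) is literally an instance of FBSDEs \rf{fbsde1}, with $(Q(t),W(t),Z(t))$ in the role of $(X_t,Y_t,Z_t)$ and with the coefficient symbols $f,g,\sg,h$ reused for the logarithmic price model. Since A2, A3, A4 are postulated for exactly these coefficients, Theorem \ref{main-result} immediately yields a solution $(Q(t),W(t),Z(t))$ with $(Q(t),W(t))$ c\`adl\`ag and pathwise unique, and with $Z(t)$ unique in $\mc S$. No further argument is required for this part.

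For the replicating-portfolio claim, the plan is to run the derivation that produced \rf{wp} backwards. By definition, the wealth generated by a self-financing strategy $\{\al_j(t)\}$ obeys the balance law
\aa{
dW(t)= \sum_{i=1}^{d} \al_i(t)\, dP_i(t) + \frac{W(t)- \sum_{i=1}^{d} \al_i(t)P_i(t)}{P_0(t)}\,dP_0(t).
}
Substituting the risky-asset SDE \rf{sdepi} and the money-market equation into this identity, and using \rf{z-al} to re-express the portfolio holdings through $Z$, one recovers exactly the drift and the stochastic term of \rf{bsdew}; this is precisely the computation carried out in passing from \rf{wp} to \rf{bsdew}. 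Hence, as soon as a portfolio $\{\al_j(t)\}$ with $\al_j(t)>0$ and \rf{z-al} is available, the BSDE \rf{bsdew} solved by the FBSDE-wealth $W(t)$ coincides with the self-financing wealth equation driven by that portfolio.

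The step I would handle with care is the identification of these two wealth descriptions. Because \rf{z-al} is assumed to hold in $\mc S$, the integrand $\{Z_i(t)\}$ of \rf{bsdew} and the integrand $\{\sum_{j=1}^d\al_j(t)\sg_i^j(t,P(t),W(t))\}$ coming from the portfolio agree $P\ox dt$-almost everywhere; by the isometry of the integral with respect to the $H^\i$ (using $\<H^\i,H^\j\>_t=\dl_{ij}t$), the corresponding stochastic integrals coincide, while the drifts coincide by the reduction already performed in \rf{wp}. Consequently the wealth obtained by following $\{\al_j(t)\}$ from the initial capital $W(0)$ agrees with $W(t)$ for every $t$, and the two c\`adl\`ag processes are indistinguishable. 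The terminal condition in \rf{bsdew} then gives $W(T)=h(Q(T))=h(\log P(T))$, so the terminal wealth reproduces the contingent claim's payoff, and the standing requirement $\al_j(t)>0$ guarantees admissibility. Therefore $\{\al_j(t)\}_{j=1}^d$ is a self-financing admissible strategy whose terminal wealth equals the claim, that is, a replicating portfolio.

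The main obstacle is exactly this identification: upgrading the $\mc S$-level (that is, $L_2$) equality \rf{z-al} to the coincidence of the driving stochastic integrals, hence of the wealth processes. This rests on the isometry of the integral against the Teugels martingales together with the fact, furnished by Theorem \ref{main-result}, that $Z$ is determined precisely at the $\mc S$-level, so that equality in $\mc S$ is exactly the right and sufficient notion here. Everything else is the bookkeeping already performed in the derivation of \rf{wp}.
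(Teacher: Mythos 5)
Your proposal matches the paper's own argument: the paper states the result as a direct consequence of Theorem \ref{main-result} applied to the system (\ref{sdeq})--(\ref{bsdew}) together with relation \rf{z-al}, which is exactly what you do. Your additional elaboration---upgrading the $\mc S$-level equality in \rf{z-al} to coincidence of the stochastic integrals and hence of the wealth processes, and reading off $W(T)=h(Q(T))$ from the BSDE---is a sound filling-in of details the paper leaves implicit, not a different route.
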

It is evident that L\'evy-type markets pose theoretical questions
that had never been raised in the Brownian motion framework.
 We conclude by reenforcing the idea that the
impossibility of replicating every potential contingent claim is
rather an expected characteristic due to the complexity of the price
formation in L\'evy-type markets, than a drawback of our model.

{\bf Acknowledgements.} The research of E. Shamarova was funded by
the European Regional Development Fund through the program COMPETE
and by the Portuguese Government through the FCT (Funda\c{c}\~ao
para a Ci\^encia e a Tecnologia) under the project
PEst-C/MAT/UI0144/2011. R.\,S. Pereira was supported by the Ph.D.
grant SFRH7BD/51172/2010 of FCT. The authors thank Wolfgang Polasek
for useful comments.
%
%
%
%

\end{document}